\let\oldmarginpar\marginpar
\renewcommand\marginpar[1]{\oldmarginpar[\raggedleft\footnotesize #1]%
{\raggedright\footnotesize #1}}
   \def\MR#1{}
\numberwithin{equation}{section}
\theoremstyle{plain}
\newtheorem{thm}{Theorem}
\numberwithin{thm}{section}
\newtheorem{lem}[thm]{Lemma}
\newtheorem{prop}[thm]{Proposition}
\newtheorem{cor}[thm]{Corollary}
\newtheorem{claim}[thm]{Claim}
\theoremstyle{definition}
\newtheorem{ex}[thm]{Example}
\newtheorem{defn}[thm]{Definition}
\newtheorem{rem}[thm]{Remark}
\newcommand{\A}{\mathcal{A}}
\newcommand{\B}{\mathcal{B}}
\newcommand{\Bi}{\PSL_2(\mathbb{Z}_{\mathbb{Q}(\sqrt{-d})})}
\newcommand{\caL}{\mathcal{L}}
\newcommand{\CC}{\mathbb{C}}
\newcommand{\ch}{\mathrm{char}}
\newcommand{\ct}{^{\dagger}}
\newcommand{\D}{\Delta}
\newcommand{\Dir}{\mathcal{D}}
\newcommand{\fu}{\mathrm{PSL}_2(\mathbb{R})}
\newcommand{\g}{\gamma}
\newcommand{\G}{\Gamma}
\newcommand{\Gal}{\mathfrak{G}}
\newcommand{\GL}{\mathrm{GL}}
\newcommand{\HH}{\mathbb{H}}
\newcommand{\HM}{\mathcal{I}}
\newcommand{\HS}{\mathcal{H}}
\newcommand{\hyp}{\frak{H}}
\newcommand{\ideal}{\vartriangleleft}
\newcommand{\Iso}{\mathrm{Isom}^+}
\newcommand{\kl}{\mathrm{PSL}_2(\mathbb{C})}
\newcommand{\M}{\mathcal{M}}
\newcommand{\mat}{\mathrm{M}}
\newcommand{\Mob}{M\"{o}bius }
\newcommand{\mtxr}{
	\begin{pmatrix}
		r & s\\
		u & v
	\end{pmatrix}}
\newcommand{\n}{\mathrm{n}}
\newcommand{\NN}{\mathbb{N}}
\newcommand{\Orb}{\mathrm{Orb}}
\newcommand{\Ord}{\mathcal{O}}
\renewcommand{\Pr}{\mathfrak{P}}
\newcommand{\Proj}{\mathrm{P}}
\newcommand{\PSL}{\mathrm{PSL}}
\newcommand{\QQ}{\mathbb{Q}}
\newcommand{\quatC}{\Big(\frac{1,1}{\mathbb{C}}\Big)}
\newcommand{\quatF}{\Big(\frac{a,b}{F}\Big)}
\newcommand{\quatFd}{\Big(\frac{a,b}{F(\sqrt{-d})}\Big)}
\newcommand{\quatK}{\Big(\frac{a,b}{K}\Big)}
\newcommand{\quatQd}{\Big(\frac{1,1}{\mathbb{Q}(\sqrt{-d})}\Big)}
\newcommand{\quatR}{\Big(\frac{1,1}{\mathbb{R}}\Big)}
\newcommand{\Ram}{\mathrm{Ram}}
\newcommand{\RR}{\mathbb{R}}
\newcommand{\SL}{\mathrm{SL}}
\newcommand{\sm}{\smallsetminus}
\newcommand{\Sp}{\mathrm{Span}}
\newcommand{\Sym}{\mathrm{Sym}}
\newcommand{\tr}{\mathrm{tr}}
\newcommand{\ZZ}{\mathbb{Z}}
\title[]
	{Macfarlane hyperbolic \boldmath$3$-manifolds}
\author{Joseph A. Quinn}
\address{Joseph A. Quinn;
	Instituto de Matem\'{a}ticas UNAM;
	Av. Universidad s/n.;
	Col. Lomas Chamilpa;
	62210 Cuernavaca, Morelos;
	M\'{e}xico}
\email{josephanthonyquinn@gmail.com}
\date{\today}
\begin{document}

\maketitle

\begin{abstract}
	We identify and study a class of hyperbolic $3$-manifolds
		(which we call Macfarlane manifolds)
		whose quaternion algebras
		admit a geometric interpretation analogous to Hamilton's
		classical model for Euclidean rotations.
	We characterize these manifolds arithmetically,
		and show that infinitely many
		commensurability classes of them arise in diverse 
		topological and arithmetic settings.
	We then use this perspective to 
		introduce a new method for computing their Dirichlet domains.
	We give similar results for a class of hyperbolic surfaces
		and explore their occurrence as subsurfaces of
		Macfarlane manifolds.
\end{abstract}

\section{Introduction}

Quaternion algebras over complex number fields
	arise as arithmetic invariants of complete
	orientable finite-volume hyperbolic $3$-manifolds
	\cite{MaclachlanReid1987}.
Quaternion algebras over totally real number fields are similarly associated
	to immersed totally-geodesic hyperbolic subsurfaces
	of these manifolds
	\cite{MaclachlanReid1987,Takeuchi1975}.
The arithmetic properties of the quaternion algebras
	can be analyzed to yield geometric and topological information
	about the manifolds
	and their commensurability classes
	\cite{MaclachlanReid2003,NeumannReid1992}.
	
In this paper
	 we introduce an alternative geometric interpretation of these algebras,
	recalling that they are a generalization of
	the classical quaternions $\HH$
	of Hamilton.
In \cite{Quinn2017a},
	the author elaborated on a classical idea of Macfarlane
	\cite{Macfarlane1900}
	to show how
	an involution on the complex quaternion algebra
	can be used to realize the action of $\Iso(\hyp^3)$
	multiplicatively,
	similarly to the classical use of the standard involution on $\HH$
	to realize the action of $\Iso(S^2)$.
Here we generalize this to a class of quaternion algebras
	over complex number fields
	and characterize them by an arithmetic condition.
We define Macfarlane manifolds as those having these algebras
	as their invariants.

We establish the existence of arithmetic and non-arithmetic
	Macfarlane manifolds
	and in each of these classes,
	infinitely many non-commensurable
	compact and non-compact examples.
We then develop a new algorithm for computing Dirichlet domains
	of Macfarlane manifolds
	and their immersed totally-geodesic hyperbolic subsurfaces,
	using the duality between
	points and isometries that comes from the quaternionic structure.

\subsection*{Main Results}

Let $X$
	be a complete orientable finite-volume hyperbolic $3$-manifold.
Let $K$
	and $\B$
	be the trace field and quaternion algebra of $X$,
	respectively.
We say that $X$
	is \emph{Macfarlane}
	if $K$
	is an imaginary quadratic extension of a (not necessarily totally)
	real field $F$,
	and the nontrivial element of the Galois group $\Gal(K:F)$
	preserves the ramification set of $\B$
	and acts on it with no fixed points.
We then show
	(in Theorem \ref{thm:macB}
	and Corollary \ref{cor:macX})
	that this is equivalent to the existence of an involution $\dagger$
	on $\B$
	which,
	with the quaternion norm,
	naturally gives rise to a $3$-dimensional
	hyperboloid $\HM_\G\subset\B$
	over $F$.
Moreover,
	$\dagger$ is unique
	and the action of $\pi_1(X)$
	by orientation-preserving isometries of $\hyp^3$
	can be written quaternionically as
	$$\pi_1(X)\lefttorightarrow\HM_\G,
		\quad (\g,p)\mapsto\g p\g^\dagger.$$

By comparison,
	via Hamilton's classical result one can use the standard involution $*$
	on $\HH$ 
	to realize $\Iso(S^2)$
	quaternionically \cite{Quinn2017a}
	as
	$$\Proj\HH^1\lefttorightarrow \HH_0^1,
		\quad (\g,p)\mapsto\g p\g^*.$$

In \S\ref{sec:man},
	we describe an adaptation of the main result for hyperbolic surfaces
	and show how,
	in certain instances,
	an immersion of a surface in a $3$-manifold
	is sufficient for the $3$-manifold to be Macfarlane.
We then study other topological and arithmetic
	conditions under which Macfarlane manifolds arise,
	culminating in the following theorem.
	
\begin{thm}\label{thm:exs}
	\begin{enumerate}
		\item[]
		\item
			Every arithmetic,
				non-compact manifold $X$
				is finitely covered by a Macfarlane manifold
				where the index of the cover is at most
				$\big|H(X,\ZZ/2)\big|$.
		\item
			Every arithmetic,
				compact manifold containing an immersed,
				closed totally geodesic subsurface is finitely covered by
				a Macfarlane manifold
				where the index of the cover is at most
				$\big|H(X,\ZZ/2)\big|$.
		\item
			Among the non-arithmetic manifolds,
				there exist infinitely many non-commensurable
				Macfarlane manifolds in each category of
				non-compact and compact.
	\end{enumerate}
\end{thm}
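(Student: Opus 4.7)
The plan is to handle the arithmetic cases (1) and (2) uniformly via a single cover construction that forces the trace field and quaternion algebra to coincide with the invariants of $X$, and then to produce the non-arithmetic families in (3) by explicit geometric constructions.

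For any finite-volume hyperbolic $3$-manifold $X=\HH^3/\G$, I would take $\G^{(2)}:=\langle \g^2 : \g\in\G\rangle$. Every element of $\G/\G^{(2)}$ has order dividing $2$, so $\G/\G^{(2)}$ is an elementary abelian $2$-group, canonically isomorphic to $H_1(X,\ZZ)\otimes\ZZ/2 \cong H_1(X,\ZZ/2)$; hence the corresponding regular cover has index exactly $|H_1(X,\ZZ/2)|$. By the standard facts about invariant trace fields and invariant quaternion algebras, the trace field and quaternion algebra of $\G^{(2)}$ coincide with the invariant trace field $k\G$ and the invariant quaternion algebra $A\G$ of $\G$. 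So for parts (1) and (2) it suffices to verify the Macfarlane condition on $(k\G,A\G)$ and then apply Theorem~\ref{thm:macB} to $\G^{(2)}$.

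For (1), the non-compact arithmetic hypothesis forces $k\G=\QQ(\sqrt{-d})$ and $A\G\cong M_2(\QQ(\sqrt{-d}))$; taking $F=\QQ$, the Macfarlane condition is vacuous since $\Ram(A\G)=\emptyset$. For (2), I would invoke the Maclachlan--Reid criterion on totally geodesic surfaces: the presence of a closed immersed totally geodesic subsurface in an arithmetic compact $X$ forces $k\G=F(\sqrt{-d})$, with $F$ the (totally real) invariant trace field of the associated Fuchsian subgroup $\G_0$, together with $A\G \cong A\G_0 \otimes_F k\G$. A place-by-place analysis then identifies $\Ram(A\G)$ with the set of places $w$ of $k\G$ lying over places $v$ of $F$ that are simultaneously split in $k\G/F$ and ramified in $A\G_0$; since split places contribute Galois-conjugate pairs, the nontrivial element of $\Gal(k\G:F)$ preserves $\Ram(A\G)$ with no fixed points. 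The local fact driving this---that a division quaternion algebra over a nonarchimedean local field becomes split after any quadratic extension of its center, and that the analogous statement at the archimedean places matches the arithmetic Kleinian ramification data---is the main technical ingredient I would state carefully.

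For (3), in the non-compact case I would use that any cusped hyperbolic $3$-manifold has $A\G\cong M_2(k\G)$, because parabolic elements force splitting; consequently the Macfarlane condition reduces to arranging that $k\G$ be imaginary quadratic over a real subfield. I would then exhibit infinite families of two-bridge link or twist-knot complements whose trace fields take this form, using distinct invariant trace fields to rule out commensurability. In the compact case, my approach is a Gromov--Piatetski-Shapiro style hybridization of two arithmetic Macfarlane pieces along a common totally geodesic boundary: the resulting closed manifold is typically non-arithmetic but still contains an immersed totally geodesic surface, and therefore inherits the Macfarlane property via the surface-to-$3$-manifold criterion developed in \S\ref{sec:man}. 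The principal obstacle throughout (3) is establishing pairwise non-commensurability across an infinite family, which I would address by tracking how the invariant trace field or the ramification locus varies with the construction parameter.
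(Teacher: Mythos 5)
Your treatment of parts (1) and (2) matches the paper's: pass to $\G^{(2)}$, whose index in $\G$ is $|H(X,\ZZ/2)|$ and whose invariants are $k\G$ and $A\G$; for (1) use that a cusped arithmetic manifold has $k\G=\QQ(\sqrt{-d})$ and split algebra, and for (2) use the Maclachlan--Reid structure of totally geodesic subsurfaces together with the base-change/ramification analysis, which is exactly the content of Lemma \ref{lem:macram} and Proposition \ref{prop:subsurf}.

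Part (3) is where your proposal has genuine gaps. In the non-compact case you propose two-bridge link or twist-knot complements ``whose trace fields take this form,'' but you give no reason such an infinite non-arithmetic family exists: most two-bridge and twist-knot trace fields are not imaginary quadratic extensions of a real subfield (e.g.\ odd-degree trace fields, which occur already for $5_2$, can never qualify), so producing infinitely many pairwise non-commensurable examples with trace field $F(\sqrt{-d})$ is precisely the point that needs proof. The paper instead invokes the Chesebro--DeBlois family: infinitely many non-commensurable link complements, all with trace field equal to the invariant trace field $\QQ(\sqrt{-1},\sqrt2)=\QQ(\sqrt2)(\sqrt{-1})$ by Proposition \ref{prop:A=B}, which settles the field condition and non-commensurability simultaneously.

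In the compact case your ``therefore'' is unjustified: containing an immersed totally geodesic surface does not by itself make a non-arithmetic hybrid Macfarlane, since Proposition \ref{prop:subsurf} also requires the hybrid's trace field to be of the form $F(\sqrt{-d})$, and a Gromov--Piatetski-Shapiro hybrid of two non-commensurable arithmetic pieces has trace field containing the compositum of the two constituent fields, whose shape and whose quaternion algebra you would still have to control. You also explicitly defer the central difficulty of exhibiting infinitely many commensurability classes. The paper sidesteps both issues by using Agol's \emph{inbreeding} rather than interbreeding: both glued pieces are submanifolds of one arithmetic Macfarlane manifold $X$, the reflection through the gluing surface lies in the commensurator of $\G$, so every hybrid has the same trace field and quaternion algebra as $X$ and is Macfarlane for free; infinitely many commensurability classes then follow by driving the injectivity radius to $0$ and applying Margulis' lemma. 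If you want to salvage your interbreeding approach you must verify the Macfarlane condition for the hybrid's algebra directly and supply a non-commensurability argument; as written, neither is done.
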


In \S\ref{sec:app},
	we use the quaternion model to improve on existing algorithms
	for computing Dirichlet domains
	and illustrate this with some basic examples.

\section{Preliminaries}\label{sec:bg}

See \cite{Bonahon2009, Ratcliffe1994}
	as general references
	for preliminary information on hyperbolic geometry
	and its relevance to Kleinian and Fuchsian groups.
See \cite{Quinn2017a}
	for preliminary information on algebras with involution,
	the standard Macfarlane space
	and some additional historical context.
See \cite{Voight2017}
	for a comprehensive treatment of quaternion algebras.

%

\subsection{Quaternion algebras}\label{sec:bg sub:qa}
Let $K$
	be a field with $\ch(K)\neq2$.
In this article $K$
	will usually be one of:
	$\RR, \CC$,
	a $p$-adic field,
	or a number field.
Throughout this article,
	a number field will always mean some $K\subset\CC$
	with $[K:\QQ]<\infty$
	and with a fixed embedding into $\CC$,
	i.e. a \emph{concrete number field}
	(in the sense of \cite{Neumann2011}).
	
\begin{defn}\label{defn:quats}
	Let $a,b\in K^{\times}$,
		called the \emph{structure constants}
		of the algebra.
	The \emph{quaternion algebra}
		$\quatK$
		is the associative $K$-algebra (with unity)
		$K\oplus Ki\oplus Kj\oplus Kij$,
		with multiplication rules $i^2=a$,
		$j^2=b$
		and $ij=-ji$.
	\begin{enumerate}
		\item
		The \emph{quaternion conjugate}
			of $q$
			is $q^*:=w-xi-yj-zk$.
		\item
		The \emph{(reduced) norm}
			of $q$
			is $\n(q):=qq^*=w^2-ax^2-by^2+abz^2$.
		\item
		The \emph{(reduced) trace}
			of $q$
			is $\tr(q):=q+q^*=2w$.
		\item
		$q$ is a \emph{pure quaternion}
			when $\tr(q)=0$.
	\end{enumerate}
\end{defn}

We indicate conditions on the trace via subscript,
	for instance given a subset $E\subset\quatK$,
	we write $E_0=\big\{q\in E\mid \tr(q)=0\big\}$
	and $E_+=\big\{q\in E\mid\tr(q)>0\big\}$.
We indicate conditions on the norm via superscript,
	for instance $E^1=\big\{q\in E\mid \n(q)=1\big\}$.
	
\begin{prop}\label{prop:tracenorm}\cite[\S3.3]{Voight2017}
	If $K\subset\CC$,
		then $\exists$
		a faithful matrix representation of $\quatK$
		into $\mat_2(\CC)$.
	Moreover,
		under any such representation,
		$\n$
		and $\tr$
		correspond to the matrix determinant and trace,
		respectively.
\end{prop}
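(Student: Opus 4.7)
The plan is to first exhibit one explicit faithful representation and then argue that for any such representation, the matrix trace and determinant must coincide with $\tr$ and $\n$ via uniqueness of the reduced characteristic polynomial.

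For existence, I would fix complex square roots $\sqrt{a}$ and $\sqrt{b}$ and define $\rho \colon \quatK \to \mat_2(\CC)$ on generators by
\[
	\rho(i) = \begin{pmatrix} \sqrt{a} & 0 \\ 0 & -\sqrt{a} \end{pmatrix},
	\qquad
	\rho(j) = \begin{pmatrix} 0 & 1 \\ b & 0 \end{pmatrix}.
\]
These satisfy $\rho(i)^2 = a\,\id$, $\rho(j)^2 = b\,\id$ and $\rho(i)\rho(j) = -\rho(j)\rho(i)$ by direct computation, so $\rho$ extends uniquely to a $K$-algebra homomorphism on $\quatK$. Faithfulness is then automatic: quaternion algebras are central simple, so any nonzero $K$-algebra homomorphism out of $\quatK$ has trivial kernel.

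For the moreover part, the key observation is that every $q = w + xi + yj + zij \in \quatK$ satisfies the degree-two polynomial identity
\[
	q^2 - \tr(q)\,q + \n(q) = 0,
\]
which I would verify by expanding $(q - w)^2 = (xi + yj + zij)^2 = ax^2 + by^2 - abz^2 \in K$. Given an arbitrary faithful $\rho$, the matrix $\rho(q)$ satisfies this same polynomial, and by Cayley--Hamilton it also satisfies its characteristic polynomial $X^2 - \tr(\rho(q))\,X + \det(\rho(q))$ (where $\tr$ here denotes the ordinary matrix trace). When $q \notin K$, the minimal polynomial of $\rho(q)$ has degree exactly $2$, so these two monic annihilators coincide, forcing $\tr(\rho(q)) = \tr(q)$ and $\det(\rho(q)) = \n(q)$. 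The remaining case $q \in K$ is immediate since $\rho(q) = q\,\id$.

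The main obstacle is justifying that the minimal polynomial of $\rho(q)$ is genuinely of degree $2$ when $q \notin K$, equivalently that $\rho(q)$ is not a scalar matrix. This follows cleanly from faithfulness: if $\rho(q) = \lambda\,\id$ for some $\lambda \in \CC$, then $\rho(q)$ is central in $\mat_2(\CC)$ and hence commutes with all of $\rho(\quatK)$, so injectivity of $\rho$ forces $q$ into the center $K$ of $\quatK$, contradicting $q \notin K$.
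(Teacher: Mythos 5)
Your proof is correct; the paper itself gives no argument for this proposition, citing it to Voight \S 3.3, and your route (an explicit split representation, simplicity of $\quatK$ for faithfulness, and the identity $q^2-\tr(q)q+\n(q)=0$ matched against the Cayley--Hamilton polynomial via the minimal-polynomial/centrality argument) is exactly the standard one found there. Nothing is missing.
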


We will be interested in the $K$-algebra
	isomorphism class of $\quatK$
	(preserving the embedding of $K$),
	which is not uniquely determined by the structure constants $a$
	and $b$.

\begin{thm}\cite[\S2]{MaclachlanReid2003}\samepage
	\label{thm:quatprops}
	\begin{enumerate}\label{thm:quatprops}
		\item Either $\quatK\cong\mat_2(K)$,
				or $\quatK$
				is a division algebra.
		\item $\Big(\frac{a,b}{K}\Big)\cong\mat_2(K)
				\iff\exists\ (x,y)\in K^2$
				such that $ax^2+by^2=1$.
		\item For all $x,y\in K^\times$,
			$\quatK\cong
			\Big(\frac{b,a}{K}\Big)\cong\Big(\frac{ax^2,by^2}{K}\Big)$.
	\end{enumerate}
\end{thm}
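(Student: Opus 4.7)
The plan is to dispatch the three parts in the order (3), (1), (2), reflecting increasing difficulty.

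Part (3) follows from explicit identifications of generators. For $\quatK \cong \Big(\frac{b,a}{K}\Big)$, simply interchange the roles of $i$ and $j$; the defining relations are symmetric under this swap once the structure constants are also swapped. For $\quatK \cong \Big(\frac{ax^2,by^2}{K}\Big)$, let $i',j'$ denote the generators of the target and send $i\mapsto i'/x$ and $j\mapsto j'/y$ (using that $x,y$ are nonzero); then $(i'/x)^2 = (ax^2)/x^2 = a$ and $(j'/y)^2 = b$, while the anticommutation of $i',j'$ transfers to $i,j$, so all defining relations are preserved.

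Part (1) is a direct application of Wedderburn--Artin. I first verify that $\quatK$ is a central simple $K$-algebra of dimension $4$: centrality is obtained by writing a general element, imposing commutation with both $i$ and $j$, and concluding it lies in $K$; simplicity follows by showing that any nonzero two-sided ideal must contain a unit (reducing to the trivial ideal case after multiplying a nonzero element by its conjugate to clear one coordinate at a time). Wedderburn--Artin then yields $\quatK \cong \mat_n(D)$ for a central division $K$-algebra $D$ with $n^2\dim_K D = 4$, so $(n,\dim_K D) \in \{(2,1),(1,4)\}$, giving exactly the stated dichotomy.

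For (2), the forward direction is immediate: given $(x,y)\in K^2$ with $ax^2 + by^2 = 1$, the element $q := xi + yj$ satisfies $q^2 = x^2 i^2 + xy(ij+ji) + y^2 j^2 = ax^2 + by^2 = 1$, whence $(q-1)(q+1) = 0$ exhibits a nontrivial zero divisor in $\quatK$, and (1) forces $\quatK \cong \mat_2(K)$.

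The converse is the main obstacle. Assuming $\quatK \cong \mat_2(K)$, Proposition \ref{prop:tracenorm} identifies the reduced norm with the determinant, which is plainly isotropic on any rank-one matrix, so there exists a nonzero $q = w + xi + yj + zij$ with $w^2 - ax^2 - by^2 + abz^2 = 0$. The task is to convert this identity into a solution of $ax^2 + by^2 = 1$, and I would split into two subcases. When $w = 0 = z$, the relation becomes $ax^2 + by^2 = 0$ with $(x,y)\neq(0,0)$, so $-b/a$ is a nonzero square $c^2$; then $a\xi^2 + b\eta^2 = a(\xi - c\eta)(\xi + c\eta)$, and choosing $\xi - c\eta = 1/a$ and $\xi + c\eta = 1$ (possible since $\ch K \neq 2$) delivers the required representation. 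The remaining subcase ($w \neq 0$ or $z \neq 0$) is the genuinely hard step: one must show that the isotropy of the quaternary norm form $\langle 1,-a,-b,ab\rangle$ forces the binary subform $\langle a,b\rangle$ to represent $1$. The cleanest route is via the theory of Pfister forms, noting that the norm is the $2$-fold Pfister form $\langle 1,-a\rangle\otimes\langle 1,-b\rangle$ and that isotropy of a Pfister form is equivalent to hyperbolicity, whence Witt cancellation on the pure subform yields the needed representation of $1$ by $\langle a,b\rangle$.
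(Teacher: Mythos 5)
The paper offers no proof of this statement at all --- it is quoted from Maclachlan--Reid, \S 2 (essentially their Theorem 2.3.1 together with the standard symbol equivalences) --- so there is no in-paper argument to compare against; you are supplying what the paper delegates to a reference. Your route is the standard textbook one. Parts (3), (1), and the forward implication of (2) are correct as written. (One caution on (1): if by ``multiplying a nonzero element by its conjugate'' you mean forming $qq^{*}=\n(q)$, that does not establish simplicity, since $\n(q)$ vanishes on nonzero elements exactly in the split case; the version of your sketch that works is the conjugation/commutator trick $q\mapsto q-iqi^{-1}$, $q\mapsto q-jqj^{-1}$, which strips coordinates and produces a unit in any nonzero two-sided ideal.)

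The one genuine gap is the final step of the converse of (2). Isotropy of the Pfister form $\langle 1,-a,-b,ab\rangle$ does imply hyperbolicity, and Witt cancellation of the $\langle 1\rangle$ summand gives $\langle -a,-b,ab\rangle\cong\langle -1,1,-1\rangle$. But this only says the \emph{ternary} pure subform is isotropic and represents $1$, i.e.\ $-ax^{2}-by^{2}+abz^{2}=1$ is solvable; it is not yet the statement that the \emph{binary} form $\langle a,b\rangle$ represents $1$, and no further application of Witt cancellation produces that, because $\langle a,b\rangle$ does not sit inside the pure subform as a cancellable orthogonal complement. You need one more (standard) step: take a nonzero isotropic vector $(x,y,z)$ of $\langle -a,-b,ab\rangle$. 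If $z=0$, then $\langle a,b\rangle$ is isotropic, hence universal, hence represents $1$ (this is your first subcase again). If $z\neq 0$, then $ax^{2}+by^{2}=abz^{2}$ shows $\langle a,b\rangle$ represents $ab$, and the binary representation criterion ($\langle a,b\rangle\cong\langle c,abc\rangle$ whenever it represents $c\neq 0$) gives $\langle a,b\rangle\cong\langle ab,(ab)^{2}\rangle\cong\langle ab,1\rangle$, which represents $1$. With that step inserted, the argument is complete.
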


It follows that for any $K$,
	there is the quaternion $K$-algebra
	$\Big(\frac{1,1}{K}\Big)\cong\mat_2(K)$.
Moreover,
	if $\quatK$
	is not a division algebra,
	then its isomorphism class is unique and can be
	represented by $\Big(\frac{1,1}{K}\Big)$.
Thus we now focus on quaternion division algebras.

\begin{ex}
	See \cite[\S2.6 and \S2.7]{MaclachlanReid2003}
		for proofs of (3)
		and (4)
		below.
	\begin{enumerate}
		\item	Over $\RR$,
				the only quaternion division algebra
				up to isomorphism is
				$\HH:=\Big(\frac{-1,-1}{\RR}\Big)$,
				Hamilton's quaternions.
		\item There are no quaternion division algebras over $\CC$.
		\item Over each $p$-adic field,
				there is a unique quaternion division algebra
				up to isomorphism.
		\item Over each (concrete) number field,
				there are infinitely many non-isomorphic
				quaternion division algebras.\label{ex:quats item:nf}
	\end{enumerate}
\end{ex}

This raises the question of how to tell,
	when $K$
	is a number field,
	whether or not two quaternion $K$-algebras
	are isomorphic.
This can be done by investigating
	the local algebras with respect to the places of $K$,
	in the following sense.

Let $K$
	be a (concrete)
	number field and let $\B=\quatK$.
For a place $v$
	of $K$,
	let $K_v$
	be the completion of $K$
	with respect to $v$.
To each $v$,
	we associate an embedding $\sigma:K\hookrightarrow K_v$
	as described in the following paragraph,
	and then define the localization of $\B$
	with respect to $v$
	as $\B_v:=\B\otimes_\sigma K_v$.
	
If $v$
	is infinite (i.e. Archimedean),
	then it corresponds (up to complex conjugation)
	to an embedding of $K$
	into $\CC$
	fixing $\QQ$,
	under which the completion of the image of $K$
	is either $\RR$
	or $\CC$,
	and we define $\sigma$
	as the corresponding embedding.
So if $\sigma(K)\subset\RR$
	then $\B_v=\Big(\frac{\sigma(a),\sigma(b)}{\RR}\Big)$,
	which is isomorphic to either $\HH$
	or $\mat_2(\RR)$.
If $\sigma(K)\not\subseteq\RR$
	then $\B_v=\Big(\frac{\sigma(a),\sigma(b)}{\CC}\Big)$,
	which is always isomorphic to $\mat_2(\CC)$.
If $v$
	is finite (i.e. non-Archimedean),
	then it corresponds to a prime ideal $\Pr\ideal\ZZ_K$,
	where $\ZZ_K$
	is the ring of integers of $K$.
In this case we define $\sigma$
	as the identity embedding into the corresponding $p$-adic
	field $K_\Pr$,
	thus $\B_v:=\Big(\frac{a,b}{K_\Pr}\Big)$.
	
\begin{defn}\samepage
	\begin{enumerate}
		\item[]
		\item $\B$
			is \emph{ramified}
			if it is a division algebra,
			and is \emph{split}
			if $\B\cong\mat_2(K)$.
		\item $\B$
			is \emph{ramified at $v$} (respectively, \emph{split at $v$})
			if $\B_v$ is \emph{ramified} (respectively, \emph{split}).
		\item $\Ram(\B)$
			is the set of real embeddings and prime ideals
			that correspond to the places where $\B$
			is ramified.
	\end{enumerate}
\end{defn}

The set $\Ram(\B)$
	provides the desired classification of isomorphism classes
	of quaternion algebras over number fields,
	as follows.

\begin{thm}\samepage\cite[\S14.6]{Voight2017}\label{thm:ram}
	\begin{enumerate}
		\item $\B$ is split if and only if $\Ram(\B)=\O$.
		\item $\Ram(\B)$
			uniquely determines the isomorphism class of $\B$.
		\item $\Ram(\B)$
			is a finite set of even cardinality,
			and every such set of places of $K$
			occurs as $\Ram(\B)$
			for some $\B$.
	\end{enumerate} 
\end{thm}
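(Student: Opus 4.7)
My plan is to deduce all three parts from the Hasse--Brauer--Noether exact sequence of global class field theory: for any number field $K$,
\begin{equation*}
    0 \to \mathrm{Br}(K) \to \bigoplus_v \mathrm{Br}(K_v) \xrightarrow{\sum_v \mathrm{inv}_v} \QQ/\ZZ \to 0,
\end{equation*}
where $v$ ranges over all places of $K$ and $\mathrm{inv}_v$ is the local Hasse invariant. Combined with the local classification of quaternion algebras already recorded in the excerpt ($\mathrm{M}_2(\CC)$ is the unique class over $\CC$; $\mathrm{M}_2(\RR)$ and $\HH$ are the two classes over $\RR$; a unique ramified class exists over each $p$-adic field), this sequence provides essentially everything we need. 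The $2$-torsion of each $\mathrm{Br}(K_v)$ maps isomorphically to $\{0,\tfrac12\}\subset\QQ/\ZZ$ under $\mathrm{inv}_v$, and since any quaternion algebra has order dividing $2$ in its Brauer group, $[\B]$ has local invariant $0$ at split places and $\tfrac12$ at ramified places.

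First I would prove (1). Being split means $\B\cong\mathrm{M}_2(K)$, which localizes to $\mathrm{M}_2(K_v)$ for every $v$, so $\Ram(\B)=\emptyset$. Conversely, $\Ram(\B)=\emptyset$ sends $[\B]$ to zero in every $\mathrm{Br}(K_v)$, and injectivity on the left of the sequence forces $[\B]=0$ in $\mathrm{Br}(K)$, i.e. $\B\cong\mathrm{M}_2(K)$. Next I would prove (2): two quaternion $K$-algebras with the same ramification set share their full tuple of local Hasse invariants, hence their Brauer classes coincide by injectivity; since any two quaternion $K$-algebras representing the same class in the $2$-torsion of $\mathrm{Br}(K)$ are isomorphic, this gives uniqueness.

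For (3), finiteness of $\Ram(\B)$ follows because there are only finitely many infinite places, and writing $\B=\bigl(\tfrac{a,b}{K}\bigr)$, the Hilbert symbol $(a,b)_\Pr$ is trivial at every prime $\Pr$ not dividing $2ab\ZZ_K$. Even cardinality is immediate from the reciprocity encoded by the sequence: summing the local invariants of $[\B]$ gives $|\Ram(\B)|\cdot\tfrac12\equiv 0\pmod{\ZZ}$, so $|\Ram(\B)|$ must be even. For existence, given any finite set $S$ of places of even cardinality consisting of real embeddings and prime ideals, the tuple assigning invariant $\tfrac12$ at $v\in S$ and $0$ elsewhere lies in the kernel of $\sum_v\mathrm{inv}_v$ on $\bigoplus_v\mathrm{Br}(K_v)$, and by exactness lifts to a (necessarily $2$-torsion) Brauer class over $K$, which is represented by a quaternion $K$-algebra whose ramification set is exactly $S$.

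The genuine obstacle is that the Hasse--Brauer--Noether sequence is itself a central theorem of class field theory whose proof requires substantial machinery (idelic cohomology and global reciprocity), so the real content of this theorem is hidden in that input; everything else in the argument is an unpacking of the sequence together with the local classification.
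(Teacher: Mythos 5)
The paper does not prove this theorem; it is quoted from \cite[\S14.6]{Voight2017}, and your argument via the Hasse--Brauer--Noether exact sequence together with the local classification is precisely the standard proof given in that reference, so you have taken essentially the same route as the paper's source. Your write-up is correct; the only step you leave implicit beyond the exact sequence itself is that the lifted $2$-torsion Brauer class in part (3) is actually represented by a \emph{quaternion} algebra (period equals index over global fields), which is the same kind of deep class-field-theoretic input you already acknowledge.
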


\subsubsection{\bf Involutions on Quaternion Algebras}

We will have need to think about involutions on $\B$
	besides quaternion conjugation,
	so we include some basic notions of more general such involutions.
	
\begin{defn}\label{defn:involution}\samepage
	An \emph{involution}
		on $\B$
		is a map $\star :\B\rightarrow\B: x\mapsto x^\star $
		such that $\forall\  x,y\in\B:$
	\begin{enumerate}
		\item $(x+y)^\star =x^\star +y^\star $,
		\item $(xy)^\star =y^\star x^\star $, and
		\item $(x^\star )^\star =x$.
	\end{enumerate}
\end{defn}

\begin{defn}\samepage
	For a subset $E\subset\B$,
		the \emph{set of symmetric elements of $\star $ in $E$},
		is $\mathrm{Sym}(E,\star):=\{x\in E\mid x^\star =x\}$.
\end{defn}
\begin{defn}
	\begin{enumerate}
		\item[]
		\item $\star $
			is \emph{of the first kind}
			if $K=\mathrm{Sym}(K,\star)$.
		\item $\star $
			is \emph{standard}
			if it is of the first kind and $\forall\  x\in A:xx^\star \in K$.
		\item $\star $
			is \emph{of the second kind}
			if $K\neq\mathrm{Sym}(K,\star)$.
	\end{enumerate}
\end{defn}

\begin{prop}\cite[\S I.2]{KnusEtc1998}\label{prop:involutions}
	Let $\star$
		be an involution on $\B$.
	\begin{enumerate}
		\item $1^\star =1$.
		\item If $\star $
			is of the first kind then it is $K$-linear.
		\item If $\star $
			is of the second kind then
			$\big[K:\mathrm{Sym}(K,\star)\big]=2$.
	\end{enumerate}
\end{prop}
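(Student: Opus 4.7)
The plan is to work directly from the three axioms in Definition \ref{defn:involution}, using the fact that $K$ is the center of $\B$.

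For (1), I would apply axiom (ii) to the product $1\cdot x^\star$ to obtain
$$x = (x^\star)^\star = (1\cdot x^\star)^\star = (x^\star)^\star\cdot 1^\star = x\cdot 1^\star$$
for every $x\in\B$, using axiom (iii) in the first equality. Specializing to $x=1$ yields $1^\star = 1$.

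For (2), the hypothesis $K=\mathrm{Sym}(K,\star)$ gives $k^\star=k$ for every $k\in K\subset\B$, while additivity is already axiom (i). Since $K$ lies in the center of $\B$, axiom (ii) at once produces
$$(kx)^\star = x^\star k^\star = x^\star k = k\, x^\star,$$
which together with additivity is $K$-linearity.

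For (3), the real content is showing that $\star$ restricts to $K$; once this is in hand the rest is elementary. Given $k\in K$ and any $x\in\B$, commutativity $kx=xk$ together with axiom (ii) yields $x^\star k^\star = (kx)^\star = (xk)^\star = k^\star x^\star$, so $k^\star$ commutes with every element of the form $x^\star$. By bijectivity of $\star$ (axiom (iii)), $k^\star$ commutes with all of $\B$, hence lies in $Z(\B)=K$. Thus $\star$ restricts to a ring automorphism of $K$; axiom (iii) forces its order to divide $2$, and the second-kind hypothesis $K\neq\mathrm{Sym}(K,\star)$ rules out the identity, so the order is exactly $2$. Elementary Galois theory (or the index formula for the fixed field of a finite group of automorphisms) then gives $[K:\mathrm{Sym}(K,\star)]=2$.

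The only step requiring any thought is the centralizer argument showing $\star(K)\subseteq K$ in part (3); everything else is a direct unpacking of Definition \ref{defn:involution}, so I expect this to be the sole, and still brief, obstacle.
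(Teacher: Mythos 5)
Your argument is correct and complete. The paper itself does not prove this proposition---it is quoted with a citation to \cite[\S I.2]{KnusEtc1998}---so there is no in-paper proof to compare against; your direct verification from Definition \ref{defn:involution} is exactly the standard one. The only step that deserves the emphasis you give it is indeed the centralizer argument in (3): since $\star$ is bijective (it is its own inverse), $\{x^\star : x\in\B\}=\B$, so $k^\star\in Z(\B)=K$, and then $\star|_K$ is a ring automorphism (multiplicativity on $K$ follows from $(k_1k_2)^\star=k_2^\star k_1^\star$ together with commutativity of $K$) of order exactly $2$, whence Artin's fixed-field theorem gives $[K:\mathrm{Sym}(K,\star)]=2$. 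Parts (1) and (2) are unimpeachable as written.
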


\subsection{The arithmetic of hyperbolic 3-manifolds}
\label{sec:bg sub:arith}

Let $X$
	be a complete orientable finite-volume hyperbolic $3$-manifold,
	and from here on this is what we will mean
	when we say \emph{manifold}.
Then $\pi_1(X)\cong\G<\kl$
	for some discrete group $\G$
	(i.e. $\G$
	is a Kleinian group).
Let $\widehat\G:=\big\{\pm\g\mid\{\pm\g\}\in\G\big\}<\SL_2(\CC)$.

\begin{defn}\label{defn:tq}
	\begin{enumerate}
		\item[]
		\item The \emph{trace field
				of $\G$}
				is $K\G:=
					\QQ\big(\big\{\tr(\g)\mid\g\in\widehat\G\big\}\big)$.
		\item The \emph{quaternion algebra 
				of $\G$}
				is $B\G:=\big\{\sum_{\ell=1}^n t_\ell\g_\ell\mid
					t_\ell\in K\G, \g_\ell\in\widehat\G, n\in\NN\big\}$.
	\end{enumerate}
\end{defn}

\begin{rem}
	In the literature
		these are usually denoted by $k_0\G$
		and $A_0\G$,
		but we write them differently
		to avoid confusion with the notation for pure quaternions.
\end{rem}

Then $K\G$
	is a concrete number field which,
	recall,
	means it is a fixed subfield of $\CC$,
	and $B\G$
	is a quaternion algebra over $K\G$ \cite{MaclachlanReid1987}.
By Mostow-Prasad rigidity,
	these are manifold invariants in the sense that
	if $\G$
	and $\G'$
	are two discrete faithful representations of $\pi_1(X)$,
	then $K\G=K\G'$
	and $B\G\cong B\G'$
	via a $K\G$-algebra isomorphism
	(though the converse does not hold).
So we may also refer to them as the \emph{trace field}
	and \emph{quaternion algebra} of $X$
	up to homeomorphism.
	
\begin{defn}\label{defn:itq}\samepage
	Let $\G^{(2)}:=\langle\g^2\mid\g\in\G\rangle$.
	\begin{enumerate}
		\item The \emph{invariant trace field
				of $\G$}
				is $k\G:=K\G^{(2)}$.
		\item The \emph{invariant quaternion algebra 
				of $\G$}
				is $A\G:=B\G^{(2)}$.
	\end{enumerate}
\end{defn}

These likewise are invariants of $X$,
	but have the stronger property of being commensurability invariants.
That is,
	if $\G$
	is commensurable
	up to conjugation
	to some Kleinian group $\G'$,	
	then $k\G=k\G'$
	and $A\G\cong A\G'$
	(though the converse does not hold)
	\cite{NeumannReid1992}.

We call $X$
	\emph{arithmetic}
	if $\G$
	is an arithmetic group in the sense of
	\cite{BorelHarish-Chandra1962},
	but this admits the following alternative characterization
	which is more useful for our purposes.

\begin{defn}\label{defn:arith}\cite{MaclachlanReid1987}
	\begin{enumerate}
		\item $\G$ (or $X$)
			is
			\emph{derived from a quaternion algebra}
			if there exists a quaternion algebra $\B$
			over a field $K$
			with exactly one complex place $\sigma$,
			such that $\B$
			is ramified at every real place of $K$,
			and $\exists$
			an order $\Ord\subset\B$
			such that $\G$
			is isomorphic to a finite-index subgroup of $\Proj\Ord^1
				:=\Ord^1/\pm1$.
		\item $\G$ 
			(or $X$)
			is \emph{arithmetic}
			if it is commensurable
			up to conjugation
			to one that is derived from a
			quaternion algebra.
	\end{enumerate}
\end{defn}

If $\G$
	is derived from a quaternion algebra $\B$
	over a field $K$,
	then $K=K\G=k\G$
	and $\B\cong B\G\cong A\G$.
If $\G$
	is arithmetic,
	then $\G^{(2)}$
	is derived from a quaternion algebra.
In general,
	$\G^{(2)}$
	is a finite-index subgroup of $\G$.
\cite{NeumannReid1992}

While $k\G$
	and $A\G$
	are generally more suitable to the application of arithmetic,
	we will work instead with $B\G$
	so that we may take advantage of the natural embedding
	$\widehat\G\hookrightarrow B\G$.
(To simplify notation,
	and where it will not cause confusion,
	we will often refer to an element $\{\pm\g\}\in\widehat\G$
	by a representative $\g\in\G$.)
Often, $A\G$
	and $B\G$
	coincide (though not always \cite{Reid1990}).
	
\begin{prop}\samepage
	\begin{enumerate}\label{prop:A=B}
		\item[]
		\item $k\G=K\G$
			if and only if $A\G\cong B\G$.
		\item If $\HS^3/\G$
			is a knot or link complement,
			then $A\G\cong B\G$.
	\end{enumerate}
\end{prop}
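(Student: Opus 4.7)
The plan is to prove (1) directly from the definitions, then derive (2) by establishing $K\G=k\G$ and invoking (1).

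For (1), the normality of $\G^{(2)}$ in $\G$ yields inclusions $k\G=K\G^{(2)}\subseteq K\G$ of subfields of $\CC$ and $A\G=B\G^{(2)}\subseteq B\G$ of subalgebras of $\mat_2(\CC)$. If $k\G=K\G$, then $A\G$ is a $K\G$-subalgebra of $B\G$; as both are quaternion algebras of dimension $4$ over $K\G$, they must coincide. Conversely, any ring isomorphism $A\G\cong B\G$ identifies centers, so $k\G\cong K\G$ abstractly with matching $\QQ$-degree, forcing equality as concrete subfields of $\CC$.

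For (2), by (1) it suffices to show $K\G=k\G$. The Wirtinger presentation exhibits $\G$ as generated by meridians, all parabolic; I choose lifts $\tilde{p}_1,\ldots,\tilde{p}_r\in\widehat\G$ with $\tr(\tilde{p}_i)=2$. Since $\G$ is the fundamental group of a hyperbolic manifold, it is torsion-free, so no element of $\G$ has order $2$ in $\PSL_2(\CC)$; equivalently, $\tr(g)\neq 0$ for every $g\in\widehat\G$. I would then prove $\tr(g)\in k\G$ for all $g\in\widehat\G$ by induction on the word length of $g$ in the generators $\{\tilde{p}_i^{\pm 1}\}$. The base cases yield $\tr(g)=\pm 2\in\QQ\subseteq k\G$. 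For the inductive step, write $g=\tilde{p}\cdot q$ with $\tilde{p}$ a generator and $q$ a shorter word satisfying $\tr(q)\in k\G$. The Fricke identity $\tr(\tilde{p})\tr(q)=\tr(\tilde{p}q)+\tr(\tilde{p}q^{-1})$ gives $\tr(\tilde{p}q^{-1})=2\tr(q)-\tr(\tilde{p}q)$, while $\tr(h^2)=\tr(h)^2-2\in k\G$ (valid for any $h\in\widehat\G$ since $h^2\in\widehat{\G^{(2)}}$) applied to $h=\tilde{p}q$ and $h=\tilde{p}q^{-1}$ gives $\tr(\tilde{p}q)^2,\tr(\tilde{p}q^{-1})^2\in k\G$. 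Expanding
\[
\tr(\tilde{p}q^{-1})^2=4\tr(q)^2-4\tr(q)\tr(\tilde{p}q)+\tr(\tilde{p}q)^2
\]
and subtracting the other terms already in $k\G$ isolates $\tr(q)\tr(\tilde{p}q)\in k\G$, after which dividing by the nonzero $\tr(q)\in k\G$ yields $\tr(\tilde{p}q)\in k\G$.

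The key subtlety I anticipate is ensuring $\tr(q)\neq 0$ in the inductive step, without which the division fails and $\tr(\tilde{p}q)$ is determined only up to a quadratic extension of $k\G$. This is precisely where the topological hypothesis enters: elements of $\widehat\G$ with trace $0$ are elliptic of order $2$ in $\PSL_2(\CC)$, and such torsion is excluded by $\G$ acting freely on $\hyp^3$. With the induction complete, $K\G=k\G$, and applying (1) delivers $A\G\cong B\G$.
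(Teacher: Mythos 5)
Your proof of part (1) is exactly the paper's: the forward direction by the dimension count $A\G\subseteq B\G$ with both $4$-dimensional over the common field, the reverse by matching centers (the paper dismisses this direction as ``immediate''). For part (2) the paper gives no argument at all, deferring to \cite[\S4.2]{MaclachlanReid2003}; you instead reconstruct that argument, and your reconstruction is sound. Your identity $4\tr(q)\tr(\tilde p q)=4\tr(q)^2+\tr(\tilde pq)^2-\tr(\tilde pq^{-1})^2$ is the parabolic special case of the standard fact that $\tr(\gamma)\tr(\delta)\tr(\gamma\delta)\in k\G$ for all $\gamma,\delta$, combined with $\tr(h)^2=\tr(h^2)+2\in k\G$; the induction on word length in the meridional (parabolic, trace $2$) generators then forces $K\G\subseteq k\G$. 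You also correctly identify where the general theory only gives $[K\G:k\G]$ a power of $2$ --- the possible vanishing of $\tr(q)$ --- and correctly dispose of it via torsion-freeness of a manifold group (trace $0$ in $\SL_2$ means $q^2=-I$, i.e.\ $2$-torsion in $\PSL_2$). The only cosmetic caveats: you should say explicitly that peripheral elements of a cusped hyperbolic $3$-manifold group are parabolic (that is where hyperbolicity, not just the Wirtinger presentation, enters), and note that the degenerate subword $q=\pm I$ has trace $\pm2\neq0$ so the division still goes through. Net effect: where the paper outsources (2) to a citation, you make the proposition self-contained at the cost of about a paragraph.
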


\begin{proof}
	We prove (1),
		and see \cite[\S4.2]{MaclachlanReid2003}
		for (2).
	The reverse implication is immediate.
	For the forward implication,
		note that $A\G\subset B\G$
		and both are $4$-dimensional vector spaces,
		so if they are over the same field
		then they must be the same.
\end{proof}

We now collect some important properties of these invariants.
	
\begin{thm}\cite[\S8.2]{MaclachlanReid2003}\label{thm:invprops}
	\begin{enumerate}
		\item If $X$
				is non-compact,
				then $B\G\cong\Big(\frac{1,1}{K\G}\Big)$
				and $A\G\cong\Big(\frac{1,1}{k\G}\Big)$.
		\item If $X$
				is non-compact and arithmetic,
				then $\exists d\in\NN$
				such that $k\G=\QQ(\sqrt{-d})$.
		\item If $X$
				is compact and arithmetic,
				then $A\G$
				is a division algebra.
	\end{enumerate}
\end{thm}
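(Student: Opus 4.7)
The plan is to handle the three parts separately, leveraging the dichotomy in Theorem \ref{thm:quatprops}(1) that a quaternion algebra is either split (isomorphic to $\mat_2$) or a division algebra, together with what parabolicity of elements of $\widehat\G$ forces structurally. Throughout, I would work inside $\SL_2(\CC)$ via the faithful matrix representation of Proposition \ref{prop:tracenorm}, so that traces and norms in $B\G$ coincide with matrix trace and determinant.

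For part (1), the key observation is that a non-compact finite-volume hyperbolic $3$-manifold has cusps, hence $\widehat\G$ contains a non-trivial parabolic $\gamma$ with $\tr(\gamma)=\pm2$. Then $\gamma\mp 1\in B\G$ is a nonzero nilpotent (its matrix is conjugate to $\bigl(\begin{smallmatrix}0&*\\0&0\end{smallmatrix}\bigr)$), hence a zero divisor. By Theorem \ref{thm:quatprops}(1), $B\G$ cannot be a division algebra, so $B\G\cong\mat_2(K\G)\cong\bigl(\tfrac{1,1}{K\G}\bigr)$. Since $\gamma^2$ is also parabolic and lies in $\widehat\G^{(2)}$, the same argument yields $A\G\cong\bigl(\tfrac{1,1}{k\G}\bigr)$.

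For part (2), I would use that arithmeticity plus non-compactness forces $k\G$ to be imaginary quadratic. By Definition \ref{defn:arith}, $\G^{(2)}$ (up to commensurability) is derived from a quaternion algebra $\B$ over a field $k$ with exactly one complex place and ramified at every real place, and this $k$ coincides with $k\G$ while $\B\cong A\G$. Since $X$ is non-compact, $\G^{(2)}$ contains a non-trivial parabolic element $\gamma$ (the square of a parabolic is parabolic). If $k\G$ had a real place $\sigma$, then $A\G$ would be ramified there and $(A\G)_\sigma\cong\HH$; but an element of $\HH^1$ has trace in $[-2,2]$ with equality only at $\pm1$, so the image of $\gamma$ would be trivial, contradicting faithfulness of the embedding. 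Hence $k\G$ has no real places and exactly one complex place, which forces $[k\G:\QQ]=2$ and $k\G=\QQ(\sqrt{-d})$ for some $d\in\NN$.

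For part (3), the strategy is the contrapositive: I would show that if $A\G$ is split, then $X$ must have cusps. Assume for contradiction that $A\G\cong\mat_2(k\G)$. By Definition \ref{defn:arith}, $\G^{(2)}$ is commensurable (up to conjugation) to $\Proj\Ord^1$ for some order $\Ord\subset A\G$. Under the isomorphism with $\mat_2(k\G)$, $\Ord$ is commensurable with $\mat_2(\ZZ_{k\G})$, and $\Proj\Ord^1$ then contains parabolic elements coming from upper unitriangular matrices. Passing to a common finite-index subgroup (and noting powers of parabolics are parabolic) shows $\G^{(2)}$, hence $\G$, contains parabolics, so $X$ has a cusp, contradicting compactness. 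Therefore $A\G$ must be a division algebra. The main technical obstacle here is the bookkeeping at the commensurability step in part (3), ensuring that parabolicity truly descends to $\G$ itself rather than being lost in the passage between the derived group, the order, and $\G^{(2)}$; the other parts are direct applications of the trace dichotomy together with elementary facts about parabolic isometries.
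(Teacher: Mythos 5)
The paper does not prove this statement; it is quoted verbatim from \cite[\S8.2]{MaclachlanReid2003} (together with \S3.3 there), so there is no internal proof to compare against. Your reconstruction is correct and is essentially the standard argument from that source: part (1) is the nilpotent-from-a-parabolic trick ($(\gamma-1)^2=0$ when $\tr\gamma=2$, so $B\G$ has zero divisors and the dichotomy of Theorem \ref{thm:quatprops}(1) forces $B\G\cong\mat_2(K\G)\cong\big(\tfrac{1,1}{K\G}\big)$, and likewise for $\G^{(2)}$); part (3) is the usual reduction to the fact that $\Proj\Ord^1$ for an order in $\mat_2(k\G)$ is commensurable with a Bianchi group and hence contains unipotents, with the commensurability bookkeeping closed off by noting that a finite-index subgroup contains a power of every parabolic (still parabolic), that conjugation preserves parabolicity, and that cocompactness is a commensurability invariant. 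For part (2), your trace computation in $\HH^1$ is valid (reduced norm $1$ and reduced trace $\pm2$ force the element to be $\pm1$, contradicting injectivity of $A\G\hookrightarrow A\G\otimes_\sigma\RR$), but note it can be streamlined: part (1) already gives that $A\G$ is split, hence unramified at every real place, whereas Definition \ref{defn:arith} requires ramification at every real place, so $k\G$ has no real places; with exactly one complex place this gives $[k\G:\QQ]=2$ and $k\G=\QQ(\sqrt{-d})$. Your longer route is just an unwinding of why ramification at a real place excludes parabolics, so the content is the same.
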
	

\subsection{Dirichlet domains}\label{sec:bg sub:DD}
\cite[\S7.4]{Bonahon2009}
We conclude our preliminary discussion by introducing
	the type of fundamental domain we will be interested in
	for our application in \S\ref{sec:app}.
Let $\mathcal{X}$
	be a model for hyperbolic $3$-space
	upon which $\G$
	acts faithfully.
\begin{defn}\label{defn:dir}
	The \emph{Dirichlet domain}
		for the action of $\G$
		on $\mathcal{X}$
		centered at $c$
		is
	\begin{gather*}
		\Dir_\G(c):=
			\Big\{p\in\mathcal{X} \; \Big|\
			\forall\  \g\in \G\smallsetminus\mathrm{Stab}_\G(c):
			d(c,p)\leq d\big(c,\g(p)\big)\Big\}.
	\end{gather*}
\end{defn}

As long as $\mathrm{Stab}_\G(c)=\{1\}$,
	the Dirichlet domain $\Dir_\G(c)$
	is a fundamental domain for the action of $\G$
	on $\mathcal{X}$,
	and since for us $\G$
	is torsion-free,
	this the case for all $c$.
There is a more explicit characterization of $\Dir_\G(c)$,
	for which the following notation is useful.

\begin{defn}\label{defn:sides}
	For each $\g\in\G$,
		let
	\begin{enumerate}
		\item
			$g(\g)$
				be the geodesic segment from $c$
				to $\g(c)$,
		\item
			$\widetilde{s}(\g)$
				be the complete geodesic hyperplane
				perpendicularly bisecting $g(\g)$,
				and
		\item 
			$E(\g)\subset\mathcal{X}$
				be the half-space
				$\Big\{p\in\mathcal{X} \; \Big|\:
					d(c,p)\leq d\big(c,\g(p)\big)\Big\}$
				(so $\widetilde{s}(\g)=\partial E(\g)$).
	\end{enumerate}
\end{defn}

Since $\G$
	is geometrically finite,
	there is some finite minimal set $S\subset\G$
	such that
	$$\Dir_\G(c)=\underset{\g\in S}{\bigcup}E(\g).$$
We say that $\g$
	\emph{contributes a side}
	to $\Dir_\G(c)$
	if $\g\in S$
	and for each of these,
	let $s(\g):=\widetilde{s}(\g)\cap\partial\Dir_\G(c)$,
	which we call the \emph{side contributed by $\g$}.
The idea is to understand $X$
	by studying $\Dir_\G(c)$
	equipped with side-pairing maps on its boundary.
These side pairings are given by applying $\g^{-1}$
	to the side contributed by $\g$,
	for each $\g\in S$.
	


%
%

\section{Macfarlane Quaternion Algebras and $\Iso(\hyp^3)$}
	\label{sec:main}

Our goal in this section is to show that the arithmetic definition of
	Macfarlane manifolds
	admits the geometric interpretation of containing a hyperboloid model
	tailor-made for the action of $\G$,
	made precise in Theorem \ref{thm:macB}.

\begin{defn}\label{defn:macB}\samepage
	A quaternion algebra $\B$
		over a fixed field $K\subset\CC$
		is \emph{Macfarlane}
		if
	\begin{enumerate}
		\item
			$\exists F\subset\RR$
				and $\exists d\in F^+$
				such that $K=F(\sqrt{-d})$, and
		\item
			the nontrivial element $\sigma$
				of $\Gal(K:F)$
				preserves $\Ram(\B)$
				and $\forall v\in\Ram(\B)$,
				$\sigma(v)\neq v$.
	\end{enumerate}
	The manifold $X$
		with corresponding Kleinian group $\G$
		is \emph{Macfarlane}
		if $B\G$
		is Macfarlane.
\end{defn}

\begin{rem}
	Note that $F$
		is a fixed subfield of $\RR$,
		thus $F^+:=\{f\in F\mid f>0\}$
		is well-defined.
	In the case where $F$
		is a (recall, concrete) number field,
		we are only concerned with the positivity of its elements
		under the identity embedding into $\RR$,
		i.e. we form $F^+$
		the same way regardless of whether or not $F$
		is totally real.
\end{rem}

\begin{ex}\samepage
	\begin{enumerate}
		\item[]
		\item $\quatC$
				is Macfarlane because $\CC=\RR(\sqrt{-1})$
				and $\Ram\quatC=\O$.
		\item The figure-$8$ knot complement
				and its quaternion algebra
				$\Big(\frac{1,1}{\QQ(\sqrt{-3})}\Big)$
				are Macfarlane.
		\item The quaternion algebra $\B$
				over $\QQ(\sqrt{-5})$	
				with $\Ram(\B)=\big\{(3,1+\sqrt{-5}),(3,1-\sqrt{-5})\big\}$
				is Macfarlane
				because $\sigma$
				permutes the ramified places.
		\item The quaternion algebra
				$\B=\Big(\frac{\sqrt2,\sqrt2}{\QQ\sqrt{1-\sqrt2}}\Big)$
				is Macfarlane.
			To see this,
				take $F=\QQ(\sqrt2)$,
				$d=\sqrt2-1$,
				and notice that $\Ram(\B)$
				consists of the pair of conjugate real embeddings
				that take $\sqrt{1-\sqrt2}$
				to $\pm\sqrt{1+\sqrt2}$.
	\end{enumerate}
\end{ex}

We now state our main result.

\begin{thm}\label{thm:macB}
	$\B$
		is Macfarlane if and only if it
		admits an involution $\dagger$
		such that $\Sym(\B,\dagger)$
		(which we denote by $\M$),
		equipped with the restriction of the quaternion norm,
		is a quadratic space of signature $(1,3)$
		over $\Sym(K,\dagger)$.
		
	Moreover,
		$\dagger$
		is unique and,
		letting $\M_+^1=\big\{p\in\M\mid\tr(p)>0,\n(p)=1\big\}$,
		a faithful action of $\Proj\B^1$
		upon $\hyp^3$
		by orientation-preserving isometries
		is defined by the group action
		$$\mu_\B:\Proj\B^1\lefttorightarrow\M_+^1,
			\quad(\g,p)\mapsto\g p\g^\dagger.$$
\end{thm}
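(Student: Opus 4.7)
My plan is to choose a presentation of $\B$ in which $\dagger$ is easy to write down, compute the signature directly, argue the converse by dimension counting and standard descent theory, and verify the group-action claim by transporting to the classical matrix model of $\hyp^3$.

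\textbf{Forward direction and existence.} The Macfarlane hypothesis together with Theorem~\ref{thm:ram} gives $\B \cong \B^\sigma$ as $K$-algebras, and the absence of $\sigma$-fixed points in $\Ram(\B)$ is exactly what is needed to trivialize the Galois-descent cocycle, so $\B \cong \B_0 \otimes_F K$ for some quaternion $F$-algebra $\B_0 \cong \Big(\frac{a_0,b_0}{F}\Big)$. Using Theorem~\ref{thm:quatprops}(3), each structure constant may be multiplied by $(r\sqrt{-d})^2 = -r^2 d$ for suitable $r \in F^\times$ to flip its sign, so since $d \in F^+$ we may assume $\B \cong \Big(\frac{a,b}{K}\Big)$ with $a,b \in F^+$. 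In this presentation define $\dagger$ by $\dagger|_K := \sigma$ and $i^\dagger := i$, $j^\dagger := j$, which by the anti-homomorphism axiom forces $(ij)^\dagger = -ij$ and yields
\[
  (w+xi+yj+zij)^\dagger \,=\, \sigma(w) + \sigma(x)\,i + \sigma(y)\,j - \sigma(z)\,ij.
\]
Well-definedness as a second-kind involution is checked directly against the relations $i^2 = a$, $j^2 = b$, $ij = -ji$, using $\sigma(a) = a$ and $\sigma(b) = b$. Then $\M = F \oplus Fi \oplus Fj \oplus F\sqrt{-d}\,ij$, and for $p = w+xi+yj+\sqrt{-d}\,r\,ij$ the quaternion norm restricts to $w^2 - ax^2 - by^2 - abd\,r^2$; with $a,b,d \in F^+$ this has signature $(1,3)$ over $F$.

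\textbf{Converse and uniqueness.} For the converse, that $\M$ has signature $(1,3)$ in particular forces $\Sym(K,\dagger) \subseteq \RR$ and $\dim_{\Sym(K,\dagger)} \M = 4$, which rules out first-kind involutions whenever $K \not\subseteq \RR$. Proposition~\ref{prop:involutions}(3) then gives $[K:F] = 2$ with $F := \Sym(K,\dagger)$, and $K = F(\sqrt{-d})$ for some $d \in F^+$ follows because the generator of $K$ over $F$ must square into the negative cone of $F$ (else $K \subseteq \RR$). Existence of a second-kind involution on $\B$ with fixed field $F$ corresponds via the standard Brauer-group translation to $\B$ descending to an $F$-form, which yields the ramification half of the Macfarlane condition. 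For uniqueness, any two such $\dagger_1,\dagger_2$ make $\dagger_2 \dagger_1$ a $K$-automorphism of $\B$, so by Skolem-Noether $q^{\dagger_2} = u\,q^{\dagger_1}\,u^{-1}$ for some $u \in \B^\times$ with $u^{\dagger_1} = c\,u$ and $c\,\sigma(c) = 1$; requiring that $\Sym(\B,\dagger_2)$ also carry a Lorentzian form whose positive sheet contains $1$ forces $u \in K^\times$, so the inner automorphism is trivial and $\dagger_2 = \dagger_1$.

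\textbf{Group action.} Preservation of $\M$ and of the norm is immediate from $(gpg^\dagger)^\dagger = gpg^\dagger$ and the identity $\n(x^\dagger) = \sigma(\n(x))$, obtained by applying $\dagger$ to $xx^* = \n(x)$. For the hyperboloid interpretation, fix the matrix embedding $\B \hookrightarrow \mat_2(\CC)$ at the unique complex place of $K$, chosen so that $\dagger$ corresponds to conjugate-transpose; then $\M \to \Herm_2(\CC)$ with $\n \to \det$ (Proposition~\ref{prop:tracenorm}), and $\M_+^1$ lands on the positive sheet of the determinant-$1$ hyperboloid, i.e., the classical hyperboloid model of $\hyp^3$. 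Under this identification $gpg^\dagger$ becomes the standard $\SL_2(\CC)$-congruence action on Hermitian matrices, which realizes $\PSL_2(\CC)$ as the orientation-preserving isometry group of $\hyp^3$; quotienting by the central $\{\pm 1\}$ gives the claimed faithful action of $\Proj\B^1$. The main obstacle is the Galois-descent step at the heart of both directions, since the correspondence between second-kind involutions and $F$-forms with sign-controlled structure constants requires combining Brauer-group invariance with real-place sign data, and everything else (checking axioms, computing the signature, transporting to the matrix model) is essentially mechanical once the right presentation is in hand.
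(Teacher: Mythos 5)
Your overall architecture is sound and runs parallel to the paper's, but two steps are handled by genuinely different means and one step has a real gap.

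On the comparison: the paper factors the theorem through Lemma~\ref{lem:Hilbert} (the involution with Lorentzian symmetric space exists iff $\B\cong\quatFd$ with $a,b,d\in F^+$) and Lemma~\ref{lem:macram} (this normalized presentation is equivalent to the ramification condition). For ``involution $\Rightarrow$ normalized form'' the paper works by hand: it first proves that $\M=\Sym(\B,\dagger)$ spans $\B$ over $K$, then diagonalizes $\n|_\M$ and compares Gram matrices over $K$ to extract positive structure constants. You replace this with Albert's theorem (a quaternion $K$-algebra carries a $K/F$-involution of the second kind iff it descends to an $F$-form), which is legitimate and shorter but imports a nontrivial black box that the paper deliberately avoids. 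Be aware that in the other direction the descent is driven by the no-fixed-points condition on $\Ram(\B)$ --- every ramified place of $K$ must lie over a \emph{split} place of $F$, which is what makes the corestriction everywhere locally trivial --- and not merely by $\B\cong\B^\sigma$; an isomorphism with the conjugate algebra alone does not trivialize the descent datum. The paper's Lemma~\ref{lem:macram} is exactly this local analysis, so you should run it (or cite it) rather than wave at ``the cocycle.'' On the plus side, the existence half (the explicit $\dagger$, the signature computation) and the transport of $(\g,p)\mapsto\g p\g^\dagger$ to the congruence action on Hermitian matrices are things the paper outsources entirely to Theorem~7.2 of \cite{Quinn2017a}; your direct arguments for these are correct and self-contained.

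The gap is in uniqueness. Skolem--Noether correctly reduces the question to showing that the intertwiner $u$ is central, but the criterion you impose does not force this. For any $u\in\M$ with $\n(u)\in F^+$, the map $q\mapsto u\,q^\dagger u^{-1}$ is again an involution of the second kind whose symmetric space is $u\M$; since $\n(um)=\n(u)\n(m)$, this space is again a quadratic space of signature $(1,3)$ over $F$, and it contains $1=u\cdot\bigl(u^*/\n(u)\bigr)$ with $1$ lying on the positive sheet. So ``Lorentzian symmetric space whose positive sheet contains $1$'' is satisfied for every such $u$, e.g.\ for any $u\in\M_+^1\smallsetminus\{1\}$, not only for $u\in K^\times$, and your argument as written does not single out $\dagger$. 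Closing this step requires whatever additional normalization pins down $\dagger$ in \cite{Quinn2017a} (for instance, uniqueness only up to conjugation by elements of $\M^\times$ of positive norm, equivalently up to the choice of base point in $\M_+^1$); as stated, the inference ``forces $u\in K^\times$'' is unjustified.
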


\begin{rem}
The isomorphism class of $\B$,
	as a quaternion algebra over the concrete number field $K$,
	does not include non-identity embeddings $K\hookrightarrow\CC$.
Thus the signature of $\Sym(\B,\dagger)$
	over $\Sym(K,\dagger)$
	is well-defined as long as $\Sym(K,\dagger)$
	is real.
\end{rem}
\begin{defn}\samepage
	$\M$
		as in Theorem \ref{thm:macB}
		is called a \emph{Macfarlane space}.
\end{defn}

\subsection*{Proof of Theorem \ref{thm:macB}}

First we show that the existence of an involution as in the Theorem
	is equivalent to a condition on the field and structure constants 
	(as in Definition \ref{defn:quats})
	of the algebra,
	up to isomorphism.
Recall from Theorem \ref{thm:quatprops}
	that even over a fixed field $F(\sqrt{-d})$,
	the structure constants $a$
	and $b$
	are not unique up to $F(\sqrt{-d})$-algebra isomorphism,
	nor are their signs.
The idea is to show that there is a representative of this isomorphism
	class which normalizes the Macfarlane space to a convenient form.

\begin{lem}\label{lem:Hilbert}
	$\B$
		admits an involution with the properties described in
		Theorem \ref{thm:macB}
		if and only if $\B\cong\Big(\frac{a,b}{F(\sqrt{-d})}\Big)$
		for some $F\subset\RR$
		and $a,b,d\in F^+$.
\end{lem}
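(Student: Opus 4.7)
The strategy is to translate the intrinsic condition on $\dagger$ into a structural classification of $\B$ by exhibiting an explicit presentation tailored to $\dagger$.

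For the forward implication, suppose $\B\cong\quatFd$ with $a,b,d\in F^+$. Define $\dagger$ on the standard basis $\{1,i,j,ij\}$ to act as Galois conjugation on $K=F(\sqrt{-d})$ and to fix both $i$ and $j$, then extend by the anti-multiplicative rule $(xy)^\dagger=y^\dagger x^\dagger$; a direct check verifies that this is an involution. The symmetric space is $\M=F\oplus Fi\oplus Fj\oplus F\sqrt{-d}\,ij$, and writing $p=w+xi+yj+z_0\sqrt{-d}\,ij$ with $w,x,y,z_0\in F$, the quaternion norm restricts to $\n(p)=w^2-ax^2-by^2-abd\,z_0^2$. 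The diagonal entries $1,-a,-b,-abd$, viewed in $F\subset\RR$, are one positive and three negative, so the signature is $(1,3)$.

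For the converse, suppose $\dagger$ satisfies the stated properties. Since $\dim_F\M=4=\dim_K\B$, we have $[K:F]=2$, so $\dagger$ is of the second kind and restricts to the nontrivial element of $\Gal(K:F)$; the signature hypothesis forces $F\subset\RR$. Applying $\dagger$ to the minimal polynomial $q^2-\tr(q)q+\n(q)=0$ and comparing coefficients yields $\tr(q^\dagger)=\tr(q)^\dagger$ and $\n(q^\dagger)=\n(q)^\dagger$, whence $\tr(\M)\subset F$. The orthogonal decomposition $\M=F1\oplus\M_0$ with $\M_0:=\M\cap\B_0$ then forces the norm on the $3$-dimensional subspace $\M_0$ to be negative definite of signature $(0,3)$.

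To produce the presentation, recall that on $\B_0$ the polarization of $\n$ is $B(p,q)=-(pq+qp)$, so two pure quaternions are $\n$-orthogonal exactly when they anticommute. Pick two $\n$-orthogonal vectors $i,j\in\M_0$; they anticommute, and $a:=i^2=-\n(i)$, $b:=j^2=-\n(j)$ both lie in $F^+$ by negative definiteness. The $K$-subalgebra they generate is a quaternion algebra isomorphic to $\quatK$, and by dimension it equals $\B$. It remains to identify $K$: write $K=F(\alpha)$ with $\alpha^2=c\in F^\times$ and $\alpha^\dagger=-\alpha$. Then $\alpha\cdot ij\in\M$, since $(\alpha\cdot ij)^\dagger=(ij)^\dagger\alpha^\dagger=(-ij)(-\alpha)=\alpha ij$, and $\n(\alpha ij)=c\cdot\n(ij)=abc$. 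The positive direction of $\M$ being spanned by $1$, the element $\alpha\cdot ij$ must have negative norm, forcing $abc<0$; with $a,b>0$ this gives $c<0$, so $c=-d$ for some $d\in F^+$.

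The main obstacle is the construction step in the converse: extracting from the abstract quadratic space $(\M,\n)$ a concrete presentation of $\B$ compatible with $\dagger$. The crucial observation is that $\n$-orthogonality on $\B_0$ coincides with anticommutation, which is what lets a diagonalizing basis of the restricted form be repackaged as quaternionic structure constants. The final sign analysis on $\n(\alpha ij)$ then pins down that $K$ must be an imaginary rather than real quadratic extension of $F$.
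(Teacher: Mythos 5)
Your proof is correct, and for the substantive direction (involution $\Rightarrow$ presentation) it takes a genuinely different route from the paper. The paper first proves that $\M$ spans $\B$ over $K$, then compares the Gram matrix of a diagonalizing basis of $\n|_\M$ with that of the standard basis $\{1,i,j,ij\}$ via a congruence over $K$ and reads off structure constants in $F^+$ from the resulting equations; the fact that $K$ is an \emph{imaginary} quadratic extension of $F$ is extracted separately from a global signature obstruction ($\n$ on $\B$ over a real field cannot contain a subspace of signature $(1,3)$). You instead work entirely inside $\M$: the orthogonal splitting $\M=F\oplus\M_0$ with $\M_0$ negative definite, the identification of $\n$-orthogonality on pure quaternions with anticommutation, and the choice of an orthogonal pair in $\M_0$ directly rebuild $\B$ as $\quatK$ with $a,b\in F^+$; no spanning lemma is needed, since the subalgebra generated by the pair is all of $\B$ by dimension, and you avoid the delicate step of relating two diagonal Gram matrices by a congruence. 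Your sign analysis of $\n(\alpha\,ij)=abc$ then pins down $c<0$, a clean local way to see that $K=F(\sqrt{-d})$ with $d\in F^+$. One step is stated too tersely: ``$\dim_F\M=4=\dim_K\B$, hence $[K:F]=2$'' is not a one-line implication; the intended argument is that $\dagger$ cannot be of the first kind (otherwise $\Sym(\B,\dagger)$ would be a $4$-dimensional $K$-subspace, i.e.\ all of $\B$, forcing $\dagger=\mathrm{id}$, impossible in a noncommutative algebra), so Proposition \ref{prop:involutions} gives $[K:\Sym(K,\dagger)]=2$. For the other direction you construct $\dagger$ explicitly and compute the signature, where the paper cites Theorem 7.2 of \cite{Quinn2017a}; both are fine.
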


The reverse direction of this,
	in the case where $\B=\quatFd$,
	is Theorem 7.2 of \cite{Quinn2017a}.
This generalizes to $\B\cong\quatFd$
	because an isomorphism
	between quaternion algebras is also a quadratic space isometry
	with respect to the quaternion norms \cite[\S5.2]{Voight2017},
	thus it transfers the multiplicative structure,
	the involution and the Macfarlane space.

So it suffices to prove the forward direction,
	and we do this via a series of claims.
Let $\B$
	be a quaternion algebra over a field $K$
	and suppose $\B$
	admits an involution $\dagger$
	with the properties described in Theorem \ref{thm:macB}.
	
\begin{claim}
	$K$
		is of the form $F(\sqrt{-d})$
  	  	where $F=\Sym(K,\dagger)\subset\RR$
		and $d\in F^+$,
		and $\dagger|_K$
		acts as complex conjugation.
\end{claim}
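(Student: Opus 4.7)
My plan is to extract the structure of $K/F$ from the signature $(1,3)$ hypothesis in three moves: pinning down the kind of $\dagger$, locating $F$ in $\RR$, and ruling out $K\subset\RR$. The signature forces $\dim_F\M=4$. If $\dagger$ were of the first kind, then $F=K$ and the classification of first-kind involutions on a quaternion algebra gives $\dim_K\M\in\{1,3\}$: the standard involution (quaternion conjugation) has $\M=K$, and every other involution of the first kind, obtained by twisting $*$ by a pure unit, has three-dimensional symmetric space. Neither option matches $\dim_F\M=4$, so $\dagger$ must be of the second kind, and Proposition \ref{prop:involutions}(3) then yields $[K:F]=2$.

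For the signature of a quadratic form over $F$ to be defined one needs an ordering on $F$. Since $F$ inherits the fixed embedding $F\subset K\subset\CC$ of the concrete number field $K$, the natural ordering is the restriction of the standard order on $\RR$, which forces $F\subset\RR$.

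The main step is to rule out the possibility that $K\subset\RR$. Suppose for contradiction that $K=F(\sqrt{\delta})$ with $\delta\in F^+$. Then $K\otimes_F\RR\cong\RR\oplus\RR$ via the two real embeddings of $\sqrt{\delta}$, and the nontrivial element of $\Gal(K:F)$ extends to the swap of factors. Consequently $\B\otimes_F\RR\cong\B_+\oplus\B_-$ for real quaternion algebras $\B_\pm$, and the $\RR$-linear extension of $\dagger$ interchanges the summands via some $\RR$-algebra anti-isomorphism $\psi\colon\B_+\to\B_-$. The fixed locus $\M\otimes_F\RR$ is then the graph $\{(x,\psi(x)):x\in\B_+\}$, and since any anti-isomorphism of quaternion algebras preserves reduced norm, this graph is isometric as a quadratic space to $(\B_+,\n_{\B_+})$. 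But the reduced norm on a real quaternion algebra has signature $(4,0)$ when $\B_+\cong\HH$ and $(2,2)$ when $\B_+\cong\mat_2(\RR)$, never $(1,3)$. This contradiction gives $\delta<0$, so $K=F(\sqrt{-d})$ with $d\in F^+$. Finally, $\dagger|_K$ is the unique nontrivial element of $\Gal(K:F)$, which under the fixed embedding $K\subset\CC$ coincides with the restriction of complex conjugation.

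The principal obstacle is the localization argument in the third paragraph: one must verify that the extension of $\dagger$ to $\B\otimes_F\RR$ really does interchange the two summands, recognize its fixed subspace as the graph of $\psi$, and confirm via the preservation of reduced norm that the induced form is isometric to $\n_{\B_+}$; once these are in place, the signature contradiction with $(1,3)$ is immediate.
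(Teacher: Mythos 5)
Your proof is correct, and it reaches the claim by a genuinely different route from the paper's. To show $\dagger$ is of the second kind, the paper argues that $K\not\subseteq\RR$ (since the norm form of a quaternion algebra over a real field has signature $(2,2)$ or $(4,0)$ and so cannot contain a subspace of signature $(1,3)$) while $\Sym(K,\dagger)$ must be real for the signature to be meaningful; together these force $\Sym(K,\dagger)\neq K$. You instead invoke the classification of first-kind involutions on a quaternion algebra (symmetric space of dimension $1$ or $3$) against $\dim_F\M=4$ --- a clean dimension count, though it leans on a classification result from \cite{KnusEtc1998} that the paper does not quote. Conversely, to rule out $K\subset\RR$ once $[K:F]=2$ is known, the paper reuses its signature-of-the-norm-form observation, whereas you base-change to $\RR$ over $F$, split $\B\otimes_F\RR$ into two factors interchanged by $\dagger$, and identify $\M\otimes_F\RR$ with the graph of a norm-preserving anti-isomorphism, hence with $(\B_+,\n)$ of signature $(4,0)$ or $(2,2)$. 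Your graph argument is longer but treats head-on the configuration the paper's one-line dismissal glosses over --- $K$ real with $\dagger$ of the second kind, where $\M$ is only an $F$-subspace for $F\subsetneq K$ --- and it records both possible real signatures where the paper mentions only $(2,2)$. The final step, that the nontrivial element of $\Gal(K:F)$ restricts to complex conjugation under the fixed embedding, is essentially identical in both arguments.
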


\begin{proof}
	If $K$
		were real,
		then $\n$
		would be a quadratic form of signature $(2,2)$,
		making it impossible for $\B$
		to contain a subspace of signature $(1,3)$,
		thus $K\not\subseteq\RR$.
	On the other hand,
		for a space to have nontrivial signature over $\Sym(K,\dagger)$,
		we must have $\Sym(K,\dagger)\subset\RR$.
	This means $\dagger$
		is an involution of the second kind,
		which implies $[K:\Sym(K,\dagger)]=2$
		by Proposition \ref{prop:involutions},
		as desired.
		
	We now show that $\dagger|_K$
		acts as complex conjugation.
	Since $-d\in F=\Sym(K,\dagger)$,
		we have
		$$(\sqrt{-d}\ct)^2=(\sqrt{-d}^2)\ct=(-d)^\dagger=-d,$$
		thus $\sqrt{-d}\ct=\pm\sqrt{-d}$.
	Since $\sqrt{-d}\notin\Sym(K,\dagger)$,
		this leaves $\sqrt{-d}\ct=-\sqrt{-d}$.
\end{proof}

Write $\M=\Sym(\B,\dagger)$.	
We are going to use the signature of $\n|_\M$
	to prove that $\B$
	has real structure parameters up to isomorphism,
	but a priori we do not know what $\M$
	is.
So we will first need to establish that $\M$
	includes enough linearly independent elements of $\B$,
	in the following sense.

\begin{claim}\label{lem:span}
	$\Sp_K(\M)=\B$.
\end{claim}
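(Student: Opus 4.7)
The plan is to exhibit any element of $\B$ as a $K$-linear combination of two symmetric elements, using the standard symmetric/skew-symmetric decomposition together with the previous claim that $\sqrt{-d}^{\,\dagger}=-\sqrt{-d}$, which lets me convert skew-symmetric elements into symmetric ones by multiplication by a scalar from $K$.

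First, since $\ch(K)\neq 2$, I would decompose an arbitrary $x\in\B$ as
$$x \;=\; s + t, \qquad s:=\tfrac{1}{2}(x+x^\dagger), \quad t:=\tfrac{1}{2}(x-x^\dagger),$$
so that $s\in\M$ automatically, and $t$ satisfies $t^\dagger=-t$. It remains to show that such a skew-symmetric $t$ lies in $\Sp_K(\M)$.

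For this, I would use that $\sqrt{-d}\in K$ is central in $\B$ (the center of a quaternion algebra is its base field) and that $\sqrt{-d}^{\,\dagger}=-\sqrt{-d}$ by the previous claim. Combined with the anti-homomorphism property from Definition~\ref{defn:involution}(2), these give
$$\bigl(\sqrt{-d}\,t\bigr)^\dagger \;=\; t^\dagger\bigl(\sqrt{-d}\bigr)^\dagger \;=\; (-t)(-\sqrt{-d}) \;=\; \sqrt{-d}\,t,$$
so $\sqrt{-d}\,t\in\M$. Hence $t=(\sqrt{-d})^{-1}\cdot(\sqrt{-d}\,t)\in K\cdot\M$, and combining this with $s\in\M$ yields $x\in\M + K\cdot\M\subseteq\Sp_K(\M)$. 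The reverse inclusion $\Sp_K(\M)\subseteq\B$ is immediate since $\M\subseteq\B$ and $\B$ is a $K$-vector space.

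I do not anticipate a real obstacle here: every algebraic input---centrality of $K$ in $\B$, $\ch(K)\neq 2$, the antimultiplicativity of $\dagger$, and the behavior of $\dagger$ on $\sqrt{-d}$---is already in place from the preliminaries and the preceding claim, so the verification amounts to a two-line computation. The only small point to flag is that the argument does not use the signature hypothesis on $\n|_\M$ at all; it uses only that $\dagger$ is an involution of the second kind on $\B$ acting as complex conjugation on $K$.
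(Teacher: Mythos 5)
Your proof is correct, and it takes a genuinely different route from the paper's. The paper argues via dimensions: it reduces to showing $\Sp_K(\M_0)=\B_0$, takes an $F$-basis $E=\{s_1,s_2,s_3\}$ of $\M_0$ (here the signature hypothesis enters, to know $\dim_F\M_0=3$), and shows $E$ stays linearly independent over $K$ by splitting the coefficients of a hypothetical dependence relation into $F$- and $\sqrt{-d}F$-parts and applying $\dagger$; a dimension count then finishes. You instead give a direct, basis-free decomposition $x=\tfrac12(x+x^\dagger)+\tfrac12(x-x^\dagger)$ and convert the skew part $t$ into the symmetric element $\sqrt{-d}\,t$ using centrality of $K$, antimultiplicativity of $\dagger$, and $(\sqrt{-d})^\dagger=-\sqrt{-d}$ from the preceding claim. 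Your observation that the signature plays no role is accurate: your argument needs only that $\dagger$ is of the second kind acting as conjugation on $K=F(\sqrt{-d})$, so it is slightly more general and also constructive. The one thing the paper's version delivers as a byproduct --- that an $F$-basis of $\M$ is automatically a $K$-basis of $\B$, which is quietly reused in the next claim when the orthogonal basis $D$ of $\M$ is treated as an orthogonal basis of $\B$ over $K$ --- still follows from your statement by a one-line count ($D$ has $4$ elements and $\Sp_K(D)=\Sp_K(\M)=\B$), so nothing downstream is lost.
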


\begin{proof}
	We know that $F\subset\M$
		and $\Sp_K(F)=K$,
		so it suffices to prove $\Sp_K(\M_0)=\B_0$.
		
	Let $E=\{s_1,s_2,s_3\}$
		be a basis for $\M_0$
		over $F$
		and assume by way of contradiction that $E$
		is not linearly independent over $K$.
	Then $\exists\  k_\ell\in K$
		such that $\sum_{\ell=1}^3k_\ell s_\ell=0$.
	Since $K=F(\sqrt{-d})$,
		we have that each $k_\ell=f_{\ell,1}+f_{\ell,2}\sqrt{-d}$
		for some $f_{\ell,1},f_{\ell,2}\in F$.
	Substituting these into $\sum_{\ell=1}^3k_\ell s_\ell=0$
		and rearranging terms,
		we get
	\begin{align*}	
		f_{1,1}s_1+f_{2,1}s_2+f_{3,1}s_3&
			=-\sqrt{-d}(f_{1,2}s_1+f_{2,2}s_2+f_{3,2}s_3).
	\end{align*}
	But $f_{1,1}s_1+f_{2,1}s_2+f_{3,1}s_3$
		and $f_{1,2}s_1+f_{2,2}s_2+f_{3,2}s_3$
		both lie in $\M$,
		so are fixed by $\dagger$,
		meanwhile by the previous claim,
		$\sqrt{-d}\ct=-\sqrt{-d}$.
	So applying $\dagger$
		to both sides of the equation gives
	\begin{align*}	
		f_{1,1}s_1+f_{2,1}s_2+f_{3,1}s_3
				&=\sqrt{-d}(f_{1,2}s_1+f_{2,2}s_2+f_{3,2}s_3).
	\end{align*}	
	Adding the last two displayed
		equations then gives that
		$f_{1,2}s_1+f_{2,2}s_2+f_{3,2}s_3=0$.
	Since $f_{1,2},f_{2,2},f_{3,2}\in F$,
		this contradicts that $E$
		is a basis for $\M_0$
		over $F$.
		
	We conclude that $E$
		is linearly independent over $K$,
		giving
	\begin{gather*}
		\dim_K\big(\Sp_K(E)\big)=\dim_K\big(\Sp_K(\M_0)\big)=3,
	\end{gather*}
		which forces $\Sp_K(\M_0)=\B_0$.
\end{proof}

\begin{claim}
	$\B\cong\quatFd$
		for some $a,b\in F^+$.
\end{claim}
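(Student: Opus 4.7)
The plan is to use the signature $(1,3)$ of $\n|_\M$ to extract positive structure constants in $F^+$, and then to build the isomorphism $\B \cong \quatFd$ from a carefully chosen orthogonal basis of $\M_0 := \M \cap \B_0$. Since $1 \in \M$ with $\n(1) = 1 > 0$ already realizes the positive direction of the signature $(1,3)$, the restriction $\n|_{\M_0}$ is negative definite of rank $3$ over $F$. Diagonalizing this form (standard in characteristic $\neq 2$) yields an $F$-orthogonal basis $\{s_1,s_2,s_3\}$ of $\M_0$ with each $\n(s_\ell)\in -F^+$; because each $s_\ell$ is a pure quaternion, we have $s_\ell^2 = -\n(s_\ell)\in F^+$. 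Set $a := s_1^2$ and $b := s_2^2$.

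Next, a standard computation shows that two pure quaternions in any quaternion algebra are orthogonal with respect to the polarization of $\n$ if and only if they anticommute; hence $s_1 s_2 = -s_2 s_1$. The universal property of $\quatFd$ therefore yields a $K$-algebra homomorphism $\phi : \quatFd \to \B$ sending $i \mapsto s_1$ and $j \mapsto s_2$ (and hence $ij \mapsto s_1 s_2$), since all of $i^2 = a$, $j^2 = b$, $ij = -ji$ hold in the image by construction. Since $\quatFd$ is central simple over $K$ and $\phi(1) = 1 \neq 0$, $\phi$ is injective; a dimension count ($\dim_K = 4$ on both sides) then forces it to be an isomorphism.

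The main obstacle I anticipate is confirming that the diagonalization genuinely produces elements whose squares lie in $F^+$; this hinges on the fact that $1$ already accounts for the single positive direction of the signature, so that $\M_0$ must be negative definite, combined with the pure-quaternion identity $p^2 = -\n(p)$. The remaining algebraic identification is handled by Claim \ref{lem:span} (which confirms that $\M$ is genuinely $4$-dimensional inside $\B$, so that $\{1,s_1,s_2,s_3\}$ is actually a $K$-basis of $\B$) together with simplicity of $\quatFd$.
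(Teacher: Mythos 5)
Your proof is correct, and it takes a genuinely different route from the paper's. You split $\M$ orthogonally as $F\cdot 1\perp\M_0$, note that the single positive direction of the signature $(1,3)$ is already accounted for by $1$ (so $\n|_{\M_0}$ is negative definite over $F$), diagonalize, and then invoke the standard presentation of a quaternion algebra: two anticommuting elements $s_1,s_2$ with $s_\ell^2=-\n(s_\ell)\in F^+$ induce a $K$-algebra homomorphism from $\quatFd$, which central simplicity and a dimension count upgrade to an isomorphism. The paper instead diagonalizes $\n|_\M$ all at once, uses Claim \ref{lem:span} to regard the diagonalizing basis as a $K$-basis of $\B$, writes the congruence $\delta G_\n^D\delta^\top=G_\n^C$ against the standard basis $\{1,i,j,ij\}$, argues that $\delta$ must be diagonal, solves for the diagonal entries, and finishes with the square-scaling identity of Theorem \ref{thm:quatprops}(3). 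Your route buys two things: it needs only $s_1$ and $s_2$ rather than the full diagonalizing basis, and it sidesteps the delicate step in which a congruence between two diagonal Gram matrices is asserted to be itself diagonal (not automatic for an arbitrary such $\delta$ --- orthogonal changes of basis give non-diagonal examples); the symbol-algebra construction is the standard way to make that passage airtight. What the paper's version buys in exchange is that it stays entirely in the language of Gram matrices and the symbol manipulations of Theorem \ref{thm:quatprops}. The only point you should make explicit is the orthogonal decomposition itself: the bilinear form attached to $\n$ satisfies $\langle 1,q\rangle=\tfrac{1}{2}\tr(q)$, so the orthogonal complement of $1$ in $\M$ really is $\M_0=\M\cap\B_0$, and nondegeneracy of the signature-$(1,3)$ form then gives $\M=F\perp\M_0$ with $\M_0$ three-dimensional, which is what your appeal to Sylvester's law requires.
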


\begin{proof}
	The norm $n|_\M$
		is a real-valued quadratic form of signature $(1,3)$,
		so there exists an orthogonal basis $D$
		for $\M$
		so that the Gram matrix for $\n|_\M$
		with respect to $D$
		is a diagonal matrix $G_{\n|_\M}^D$,
		with diagonal of the form $(f_1,-f_2,-f_3,-f_4)$
		for some $f_\ell\in F^+$.
	Since $\Sp_K(\M)=\B$,
		this same $D$
		is also an orthogonal basis for $\B$
		over $K$.
		
	Let $C$ be the standard basis $\{1,i,j,ij\}$
		for $\B$.
	Then $C$ is another orthogonal basis for $\B$
		over $K$
		and,
		in particular,
		the Gram matrix $G_\n^C$
		for $\n$
		with respect to $C$
		is the diagonal matrix with diagonal $(1,-a,-b,ab)$.
		
	Now while $G_{\n|_\M}^D$
		and $G_{\n}^C$
		are not congruent over $F$,
		they are congruent over $K$
		because $D$
		and $C$
		are both bases for $\B$,
		i.e. $\exists\ \delta\in\GL_4(K)$
		such that
	\begin{align*}\label{gramstand}
		\delta G_\n^D\delta^\top=G_\n^C.
	\end{align*}
	But since $G_\n^D$
		and $G_\n^C$
		are diagonal and nonzero on their diagonals,
		$\delta$ must also be diagonal and nonzero on its diagonal,
		i.e. $\exists\ x_\ell\in K^\times$
		such that $\delta$
		is the diagonal matrix with diagonal $( x_1,x_2,x_3,x_4)$.
	Plugging in to the last displayed equation
		and solving for the $f_\ell$
		gives
	\begin{gather*}
		f_1=\frac{1}{x_1^2}\qquad
		f_2=\frac{a}{x_2^2}\qquad
		f_3=\frac{b}{x_3^2}\qquad
		f_4=\frac{-ab}{x_4^2}.
	\end{gather*}
	
	Now let $\B'=\Big(\frac{f_2,f_3}{F(\sqrt{-d})}\Big)$
		and recall that $f_2,f_3\in F^+$.
	Then $\B$
		has the desired form because,
		by Theorem \ref{thm:quatprops},
		$$\B'\cong\Big(\frac{f_2x_2^2,f_3x_3^2}{F(\sqrt{-d})}\Big)=\B.$$
\end{proof}

This completes the proof of Lemma \ref{lem:Hilbert}.

Now to complete the proof of Theorem \ref{thm:macB},
	we show that the condition on the isomorphism class
	of the symbol $\quatK$
	from Lemma \ref{lem:Hilbert}
	is equivalent to the arithmetic characterization of Macfarlane
	quaternion algebras given by Definition \ref{defn:macB}.

\begin{lem}\label{lem:macram}
	Let $\B$
		be a quaternion algebra over $K=F(\sqrt{-d})$
		where $F\subset\RR$
		and $d\in F^+$.
	The nontrivial element of $\Gal(K:F)$
		preserves $\Ram(\B)$
		with no fixed points if and only if $\exists a,b\in F^+$
		such that $\B\cong\quatK$.
\end{lem}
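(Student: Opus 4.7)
The plan is to reduce both directions to the base-change statement $\B\cong\mathcal{C}\otimes_F K$ for some quaternion algebra $\mathcal{C}$ over $F$, and then read off the ramification of $\mathcal{C}\otimes_F K$ place by place. The key local fact (used in both directions) is that every quaternion algebra over a local field $F_v$ is split by any quadratic extension of $F_v$. Consequently $\mathcal{C}\otimes_F K$ can ramify at a place $w$ of $K$ only when $K_w=F_{w|_F}$, i.e.\ when $w$ lies over a place of $F$ that splits in $K$; and since any such split place of $F$ has two preimages in $K$ swapped by $\sigma$, every element of $\Ram(\mathcal{C}\otimes_F K)$ automatically appears in a $\sigma$-pair $\{w,\sigma(w)\}$ with $\sigma(w)\neq w$.

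For $(\Leftarrow)$, the algebra $\mathcal{C}:=\quatF$ satisfies $\B\cong\mathcal{C}\otimes_F K$, and the observation above immediately gives the required $\sigma$-structure of $\Ram(\B)$. For $(\Rightarrow)$, group $\Ram(\B)$ into $\sigma$-pairs $\{w_i,\sigma(w_i)\}_{i=1}^k$ and let $S=\{w_i|_F:1\leq i\leq k\}$, a set of $k$ places of $F$. Because $d\in F^+$, the identity embedding of $F$ extends to a complex (not real) place of $K$ and hence cannot appear in $S$. Set $T:=S$ if $k$ is even, and $T:=S\cup\{v_0\}$ if $k$ is odd, where $v_0$ is a finite place of $F$ that is inert in $K$ and not already in $S$; such $v_0$ exist in infinite supply because $K/F$ is a nontrivial quadratic extension of number fields (for instance by Chebotarev density). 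Then $T$ is a finite set of even cardinality that omits the identity embedding, so by Theorem \ref{thm:ram} there is a quaternion algebra $\mathcal{C}$ over $F$ with $\Ram(\mathcal{C})=T$. By the local fact above, $v_0$ (when present) contributes nothing to $\Ram(\mathcal{C}\otimes_F K)$ while the split places in $S$ produce exactly the pairs in $\Ram(\B)$, so $\Ram(\mathcal{C}\otimes_F K)=\Ram(\B)$ and $\B\cong\mathcal{C}\otimes_F K$ by Theorem \ref{thm:ram}(2).

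Finally, $\mathcal{C}$ is unramified at the identity embedding by construction, so the norm form on its pure quaternions has signature $(1,2)$ over $\RR$. Diagonalizing this form over $F$ (as in the proof of Lemma \ref{lem:Hilbert}) and taking two $F$-basis vectors $i',j'$ with negative norms at the identity embedding produces anticommuting pure quaternions with $i'^2,j'^2\in F^+$, so $\mathcal{C}\cong\quatF$ with $a,b\in F^+$. Tensoring with $K$ then yields $\B\cong\quatK$ with $a,b\in F^+$, as required. The main obstacle is the parity adjustment in $(\Rightarrow)$: when $k$ is odd one must introduce an auxiliary ramified place $v_0$ of $\mathcal{C}$ that remains invisible after base change, and verify that such a $v_0$ can be chosen disjoint from $S$ and from the identity embedding --- which reduces to the infinitude of non-split finite primes under the nontrivial quadratic extension $K/F$.
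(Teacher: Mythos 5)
Your proof is correct and follows essentially the same strategy as the paper's: reduce both directions to the statement that $\B$ is a base change $\mathcal{C}\otimes_F K$ of a quaternion algebra over $F$, then compare ramification place by place using the fact that any quadratic extension of a local field splits the local quaternion algebra, so that ramification of $\mathcal{C}\otimes_F K$ survives only in $\sigma$-swapped pairs above split places. You are in fact more careful than the paper on two points: the paper asserts a ``bijection'' between algebras of the form $\A\otimes_F K$ and sets of $\sigma$-pairs of places without addressing the parity of $|\Ram(\mathcal{C})|$, which your auxiliary inert place $v_0$ handles explicitly; and for the positivity of $a,b$ the paper uses the cheaper substitution $a\mapsto -ad$ (valid since $-d$ is a square in $K$) rather than your signature argument at the identity embedding, so your final step could be simplified.
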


\begin{proof}
	With $a, b, F$
		and $K$
		as in the statement,
		notice that $\quatK=\quatF\otimes_FK$.
	Also if $a$ (or $b$)
		is negative,
		then by Theorem \ref{thm:quatprops},
		we can replace it by $-ad$
		(or $-bd$)
		without changing the isomorphism class.
	So it suffices to prove
		that the condition on $\Ram(\B)$
		is equivalent to the existence of a quaternion algebra $\A$
		over $F$
		such that $\B\cong\A\otimes_FK$.

	If there is such an $\A$,
		then $\Ram(\B)$
		is the set of pairs of real
		(respectively, non-Archimedean)
		places $v,v'$
		of $K$
		associated with a real place in $\Ram(\A)$
		that splits into two real places of $K$
		(respectively, associated with a prime ideal of $\ZZ_F$
		that splits in $\ZZ_K$).
	By Theorem \ref{thm:quatprops} (3),
		this sets up a bijection between quaternion $K$-algebras
		of the form $\A\otimes_FK$
		and sets of pairs of places arrising as described.
	In this situation,
		the nontrivial element $\sigma$
		of $\Gal(K:F)$
		will stabilize $\Ram(\B)$,
		and interchange $v$
		and $v'$.
	Conversely,
		if $\sigma$
		preserves and acts freely on $\Ram(\B)$,
		then there are no inert or ramified primes as these
		are fixed by $\Gal(K:F)$,
		forcing the ramified places of $K$
		to be as described above.
\end{proof}

This completes the proof of Theorem \ref{thm:macB}.
The following consequence has computational advantages
	which will be exploited in \S\ref{sec:app}.

\begin{cor}\label{cor:abd}
	If $\B$
		is Macfarlane,
		then there is an isomorphism $\B\cong\quatFd$
		where $a,b,d\in F^+$.
	In this case the Macfarlane space is
		$$\M=F\oplus Fi\oplus Fj\oplus\sqrt{-d}Fij$$
		and for $q=w+xi+yj+zij\in\B$
		with $w,x,y,z\in F(\sqrt{-d})$,
		the involution $\dagger$
		is given by
	\begin{align}\label{dagger}
		q\ct=\overline{w}+\overline{x}i+\overline{y}j-\overline{z}ij.
	\end{align}
\end{cor}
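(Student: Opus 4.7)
My plan is to reduce to the standard quaternion model furnished by Theorem \ref{thm:macB}, write down $\dagger$ by hand, and then invoke that theorem's uniqueness clause to identify the concrete formula with the abstract involution.

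The existence of an isomorphism $\B \cong \quatFd$ with $a,b,d\in F^+$ is exactly the combined content of Lemmas \ref{lem:Hilbert} and \ref{lem:macram}, so I would fix such an identification and work inside $\quatFd$ henceforth. I would then define $\dagger$ via formula (\ref{dagger}) --- where the overline denotes the nontrivial element of $\Gal(K:F)$ --- and verify three things: that $\dagger$ is an involution; that $\Sym(\B,\dagger)=F\oplus Fi\oplus Fj\oplus\sqrt{-d}Fij$; and that the restriction of $\n$ to this subspace is a quadratic form of signature $(1,3)$ over $F$. Together with the uniqueness asserted in Theorem \ref{thm:macB}, these force the $\dagger$ of (\ref{dagger}) to be the Macfarlane involution, which yields both stated formulas.

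For the first item, I would factor $\dagger=\kappa\circ\iota$, where $\kappa$ is the $F$-algebra automorphism of $\B$ acting as Galois conjugation on each coordinate (well-defined via $\B\cong K\otimes_F\quatF$, since $a,b\in F$), and $\iota$ is the $K$-linear map fixing $1,i,j$ and sending $ij\mapsto -ij$. The universal property of $\quatK$ reduces antimultiplicativity of $\iota$ to the requirement that $\iota(i)=i$, $\iota(j)=j$ satisfy $\iota(i)^2=a$, $\iota(j)^2=b$, and $\iota(j)\iota(i)=-\iota(i)\iota(j)$; the last of these is just the defining commutation $ji=-ij$ in $\B$. Since $\kappa$ and $\iota$ commute on the $F$-basis $\{1,i,j,ij\}$ (each acts there by $\pm\id$) and interact compatibly with $K$-scalars, they commute globally, so $\dagger=\kappa\iota$ is an antiautomorphism with $\dagger^2=\kappa^2\iota^2=\id$.

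The second item is immediate from (\ref{dagger}): $q=w+xi+yj+zij$ is fixed by $\dagger$ iff $w,x,y$ are Galois-fixed and $z$ is Galois-negated, i.e.\ $w,x,y\in F$ and $z\in\sqrt{-d}F$. For the third, a direct computation gives $\n(w+xi+yj+\sqrt{-d}z\,ij)=w^2-ax^2-by^2-abd\,z^2$ on $\M$, which has signature $(1,3)$ since $a,b,d\in F^+$. The main obstacle is the antimultiplicativity of $\iota$; a direct coordinate expansion on two arbitrary quaternions would succeed but be tedious, whereas the universal-property argument above keeps everything mechanical.
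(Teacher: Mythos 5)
Your proposal is correct and follows essentially the route the paper intends: the paper states Corollary \ref{cor:abd} without proof, with the isomorphism coming from Lemma \ref{lem:macram} and the explicit formulas for $\M$ and $\dagger$ deferred to Theorem 7.2 of \cite{Quinn2017a} (cited as the reverse direction of Lemma \ref{lem:Hilbert}). Your factorization $\dagger=\kappa\circ\iota$, the universal-property check of antimultiplicativity, the signature computation, and the appeal to the uniqueness clause of Theorem \ref{thm:macB} together constitute a clean, self-contained verification of exactly that deferred content.
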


A final remark on Theorem \ref{thm:macB}
	is that even though we are using $\M_+^1$
	as a hyperboloid model for the group action,
	it is technically not a model for hyperbolic $3$-space
	unless $F=\RR$.
In the cases of interest,
	$F$
	is a number field embedded in $\RR$,
	and this gives us all we need to model a Macfarlane manifold.
If a complete model for hyperbolic $3$-space were desired,
	one is given by $(\M\otimes_F\RR)_+^1$,
	but this would lose the arithmetic structure that makes
	Macfarlane manifolds interesting.
		
\section{Macfarlane Manifolds}\label{sec:man}

In this section we explore the various conditions
	under which Macfarlane manifolds arise.
First we clarify how Theorem \ref{thm:macB}
	translates to the context of Kleinian groups.
Then we look at an adaptation of our results to hyperbolic surfaces
	and see some settings in which hyperbolic subsurfaces imply
	that manifolds are Macfarlane.
The remainder of the section culminates in a proof of Theorem \ref{thm:exs},
	about the diverse settings in which Macfarlane manifolds arise.

As in \S\ref{sec:bg sub:arith},
	let $X$
	denote a complete orientable finite-volume hyperbolic $3$-manifold
	with Kleinian group $\G\cong\pi_1(X)$.
Let $K=K\G$
	and $\B=B\G$.

\begin{defn}\samepage
	When $X$
		is Macfarlane and $\M\subset\B$
		is its Macfarlane space as in Theorem \ref{thm:macB},
		define $\HM_\G:=\M_+^1$
		and call this a \emph{quaternion hyperboloid model for $\G$
			(or $X$)}.
\end{defn}

It is immediate that $\HM_\G$,
	up to quadratic space isometry over $K\G$,
	is a manifold invariant.
		
By the definition of $B\G=\B$,
	there is no confusion in speaking of $\G$
	quaternionically,
	as lying in $\Proj\B^1$
	rather than in $\kl$.
In this way,
	$\G$
	(up to choice of representatives in $\widehat\G$)
	and $\HM_\G$
	are both subsets of $\B$,
	making sense of the following.

\begin{cor}\label{cor:macX}
	If $X$
		is Macfarlane,
		then the action of $\G$
		by orientation-preserving isometries of $\hyp^3$
		is faithfully represented by
		$$\mu_\G:\G\lefttorightarrow\HM_\G,
			\quad(\g,p)\mapsto\g p\g^\dagger.$$
\end{cor}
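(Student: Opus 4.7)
The plan is to deduce the corollary from Theorem~\ref{thm:macB} by restricting the action of $\Proj\B^1$ to the subgroup $\G$, so there is essentially no new geometric content to establish.

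First I would verify that $\G\subset\Proj\B^1$. By Definition~\ref{defn:tq} and the convention of identifying $\{\pm\g\}\in\widehat\G$ with a representative, $\G$ lies inside $\B=B\G$; since $\widehat\G\subset\SL_2(\CC)$, Proposition~\ref{prop:tracenorm} (which identifies the matrix determinant with the reduced norm $\n$) gives $\widehat\G\subset\B^1$ and hence $\G\subset\Proj\B^1$. Because $X$ is Macfarlane, so is $\B$, and Theorem~\ref{thm:macB} applies and yields the faithful action
$$\mu_\B:\Proj\B^1\lefttorightarrow\HM_\G,\quad(\g,p)\mapsto\g p\g\ct,$$
by orientation-preserving isometries of $\hyp^3$. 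Restricting $\mu_\B$ to $\G\subset\Proj\B^1$ produces $\mu_\G$ as defined in the statement, and since the restriction of a faithful action to a subgroup is automatically faithful, the required faithfulness of $\mu_\G$ follows.

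The only remaining point worth comment is that the geometric action of $\G$ on $\hyp^3$ induced by $\mu_\G$ coincides with the standard Kleinian action coming from the inclusion $\G\hookrightarrow\kl\cong\Iso(\hyp^3)$. This is implicit in Theorem~\ref{thm:macB}: its homomorphism $\Proj\B^1\to\Iso(\hyp^3)$, when restricted to $\G\subset\kl$, must agree with the standard inclusion. Consequently there is no substantive obstacle to overcome; the corollary is essentially a restriction statement once Theorem~\ref{thm:macB} is in hand.
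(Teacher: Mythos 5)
Your proposal is correct and matches the paper's treatment: the paper itself gives no separate proof, only the observation preceding the corollary that $\G$ may be viewed inside $\Proj\B^1$ (via $B\G=\B$), after which the statement is an immediate restriction of the action $\mu_\B$ from Theorem~\ref{thm:macB}. Your additional remarks on faithfulness of the restricted action and on compatibility with the standard Kleinian action are exactly the points the paper leaves implicit.
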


\subsection{Hyperbolic Surfaces and Subsurfaces}
The author showed in \cite{Quinn2017a}
	how the representation of $\Iso(\hyp^3)$
	in $\quatC$
	restricts to a representation of $\Iso(\hyp^2)$
	in $\quatR$.
Similarly,
	we seek analogues of Theorem \ref{thm:macB}
	and Corollary \ref{cor:macX}
	for hyperbolic surfaces.
This is possible to some extent
	but we must proceed with care because of the following important
	differences between the $3$-dimensional
	and $2$-dimensional settings.
For a complete orientable finite-volume hyperbolic surface $S$,
	the group $\pi_1(S)$
	admits discrete faithful representations
	into $\fu$
	but,
	in the absence of Mostow-Prasad rigidity,
	the trace of some fixed hyperbolic element of $\pi_1(S)$
	could take any value in $\RR^{>2}$
	under these representations,
	so the trace field is no longer a manifold invariant.
We resolve this by requiring $S$
	to have a fixed immersion into a hyperbolic $3$-manifold
	$X$
	under which it is totally geodesic,
	which implies an injection $\pi_1(S)\hookrightarrow\pi_1(X)$.
We can do this in such a way that,
	taking the discrete faithful representation
	$\pi_1(X)\cong\Gamma<\PSL_2(\CC)$,
	we represent $\pi_1(S)\cong\Delta<\G$
	as a fixed group of matrices.
	
We then define the \emph{(invariant) trace field of $\D$}
	and \emph{(invariant) quaternion algebra of $\D$}
	on this fixed representation
	in the same way as in Definitions \ref{defn:tq}
	and \ref{defn:itq},
	and we denote them similarly by ($k\D$) $K\D$
	and ($A\D$)
	$B\D$,
	respectively.
These now have properties similar
	to what we saw in the Kleinian setting:
	$K\Delta$
	and $k\Delta$
	are now fixed subfields of $\RR$,
	$B\Delta$
	and $A\D$
	are quaternion algebras over $K\D$
	and $k\D$
	respectively \cite{Takeuchi1969},
	and $A\D$
	is a commensurability invariant
	\cite[\S4.9 and \S5.3.2]{MaclachlanReid2003}.
	
The results from \cite[\S6]{Quinn2017a}
	along with the proof of Lemma \ref{lem:Hilbert}
	then give the corollary below,
	after the following observations.
The field $K\D$
	is real,
	and so now the involution $\dagger$
	is of the first kind.
That is, $\Sym(K\D,\dagger)=K\D$
	and $\Sym(B\D,\dagger)$
	is comprised of $K\D$
	and the unique $2$-dimensional
	negative-definite subspace with respect to the norm on $B\D$.

\begin{cor}\label{cor:surf}
	If $B\D\cong\Big(\frac{a,b}{K\D}\Big)$
		for some $a,b>0$,
		then it admits an involution $\dagger$
		such that $\Sym(B\D,\dagger)$
		(which we denote by $\caL$),
		equipped with the restriction of the quaternion norm,
		is a quadratic space of signature $(1,2)$
		over $K\D$.
		
	Moreover,
		$\dagger$
		is unique and,
		letting
		$\caL_+^1=\big\{p\in\caL\mid\tr(p)>0,\n(p)=1\big\}$,
		a faithful action of $\D$
		upon $\hyp^2$
		by orientation-preserving isometries
		is defined by the group action
		$$\mu_\D: \D\lefttorightarrow\caL_+^1,
			\quad(\g,p)\mapsto\g p\g^\dagger.$$
\end{cor}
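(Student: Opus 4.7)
The plan is to specialize the proofs of Theorem \ref{thm:macB} and Lemma \ref{lem:Hilbert} to the real setting $K\D \subset \RR$, where $\dagger$ must be an involution of the first kind. This simplifies matters because there is no nontrivial Galois action on $K\D$, and the required signature drops from $(1,3)$ to $(1,2)$, so a three-dimensional symmetric subspace suffices and no complex extension is required.

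\textbf{Construction.} Using $B\D \cong \Big(\frac{a,b}{K\D}\Big)$ with $a, b > 0$, I would define
\[
(w + xi + yj + zij)^\dagger := w + xi + yj - zij \qquad (w,x,y,z \in K\D).
\]
The axioms of Definition \ref{defn:involution} follow from a direct computation using $i^2 = a$, $j^2 = b$, $ij = -ji$, and $\dagger$ fixes $K\D$ pointwise, so it is of the first kind. Then $\caL := \Sym(B\D, \dagger) = K\D \oplus K\D i \oplus K\D j$, and the restriction of $\n$ to $\caL$ is the diagonal form $w^2 - ax^2 - by^2$, which has signature $(1,2)$ over $K\D$ precisely because $a, b > 0$.

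\textbf{Uniqueness.} This follows by adapting the Gram-matrix comparison of Lemma \ref{lem:Hilbert}: any involution $\dagger'$ of the first kind on $B\D$ whose symmetric subspace has signature $(1,2)$ determines a decomposition $B\D = \caL' \oplus (\caL')^{\perp}$, where the orthogonal complement is a one-dimensional skew-symmetric line. Comparing the diagonalization of $\n$ in an orthogonal basis of $\caL'$ with that in the standard basis $\{1, i, j, ij\}$, and using the multiplicativity constraint $(pq)^{\dagger'} = q^{\dagger'} p^{\dagger'}$, forces this decomposition to agree with the one above up to the choice of isomorphism $B\D \cong \Big(\frac{a,b}{K\D}\Big)$, giving $\dagger' = \dagger$.

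\textbf{Action and isometry.} The map $(\g, p) \mapsto \g p \g^\dagger$ preserves $\caL$ because $\dagger$ is an anti-automorphism of the first kind, and preserves $\n$ because $\n(\g p \g^\dagger) = \n(\g)^2 \n(p) = \n(p)$ for $\g$ of reduced norm one. Since $a, b > 0$, we have $B\D \otimes_{K\D} \RR \cong \mat_2(\RR)$, so $(B\D \otimes_{K\D} \RR)^1 \cong \SL_2(\RR)$ is connected, and the induced map into $O(\n|_{\caL \otimes \RR})$ lands in the identity component, which preserves the positive-trace sheet $\caL_+^1$. The main obstacle is the final identification of $\mu_\D$ with the classical orientation-preserving $\PSL_2(\RR)$-action on $\hyp^2$; I would invoke \cite[\S6]{Quinn2017a}, where this identification is carried out explicitly for the standard symbol $\Big(\frac{1,1}{\RR}\Big) \cong \mat_2(\RR)$ via the hyperboloid model, and then transfer the conclusion along the isomorphism $B\D \cong \Big(\frac{a,b}{K\D}\Big)$ exactly as in the remark following Lemma \ref{lem:Hilbert}.
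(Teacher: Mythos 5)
Your construction, signature computation, and final appeal to \cite[\S6]{Quinn2017a} for identifying $\mu_\D$ with the isometric action on $\hyp^2$ match the paper's own (very terse) proof, which consists of exactly that citation together with the proof of Lemma \ref{lem:Hilbert} and the observation that $K\D\subset\RR$ forces $\dagger$ to be of the first kind with $\Sym(B\D,\dagger)=K\D\oplus(\text{a $2$-dimensional negative-definite subspace})$. Your explicit formula $q^\dagger=w+xi+yj-zij$ (which one checks is $(ij)^{-1}q^*(ij)$, hence genuinely anti-multiplicative) and the computation $\n|_{\caL}=w^2-ax^2-by^2$ of signature $(1,2)$ are correct and amount to the reverse direction of Lemma \ref{lem:Hilbert} specialized to a real base field, so on the existence and action parts you are doing essentially what the paper does, only more explicitly.

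The one step that does not close as written is uniqueness. Your Gram-matrix comparison shows only that $\caL'$ is \emph{isometric} to $\caL$ as a quadratic space; it does not pin down $\caL'$ as a subspace of $B\D$, and the phrase ``up to the choice of isomorphism $B\D\cong\big(\frac{a,b}{K\D}\big)$'' concedes exactly the ambiguity that needs to be excluded. Concretely, every first-kind non-standard involution has the form $q\mapsto u^{-1}q^*u$ for a pure invertible $u$, with symmetric space $K\D\oplus u^{\perp}$; whenever $\n(u)>0$ this symmetric space again has signature $(1,2)$, and distinct positive lines $K\D u$ (equivalently, conjugating $\dagger$ by an inner automorphism) give distinct such involutions. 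So an argument for uniqueness must explain in what sense these are being identified, or must use some additional normalization; the paper itself supplies no more detail here than you do, deferring the point entirely to \cite[\S6]{Quinn2017a}.
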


\begin{defn}
	We call the space $\caL\subset B\D$
		as above
		a \emph{restricted Macfarlane space},
		and we call the space $\HM_\D:=\caL_+^1$
		a \emph{quaternion hyperboloid model for $\D$}.
\end{defn}
	
We next give a way of realizing Macfarlane $3$-manifolds
	using totally-geodesic subsurfaces.
There is a stronger version of this in the arithmetic setting,
	which will be done in the next subsection.
	
\begin{prop}\label{prop:subsurf}\samepage
	If $X$
		contains an immersed closed totally-geodesic surface,
		and its trace field is $F(\sqrt{-d})$
		for some $F\subset\RR$
		(not necessarily totally real)
		and $d\in F^+$
		(under the fixed embedding),
		then $X$
		is Macfarlane.
\end{prop}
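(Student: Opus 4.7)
The plan is to verify condition (2) of Definition \ref{defn:macB} by exhibiting an isomorphism $B\G \cong \Big(\frac{a,b}{F(\sqrt{-d})}\Big)$ with $a,b \in F^+$, and then applying Lemma \ref{lem:macram}. Let $\D < \G$ denote the fixed matrix realization of $\pi_1(S)$ inside $\G < \kl$ coming from the immersion. Since $S$ is totally geodesic, after a single global conjugation of $\G$ inside $\kl$ we may arrange $\D < \fu < \kl$, so $B\D \subset \mat_2(\RR)$ and $K\D \subset \RR$. The inclusion $\widehat\D \subset \widehat\G$ gives $K\D \subset K\G = F(\sqrt{-d})$, and combined with $K\D \subset \RR$ and $F(\sqrt{-d}) \cap \RR = F$ (which holds because $d \in F^+$), this forces $K\D \subset F$.

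Next I would analyze $B\D$. Since $\D$ is non-elementary (as $S$ is closed) and $B\D \subset \mat_2(\RR)$, scalar extension gives $B\D \otimes_{K\D} \RR \cong \mat_2(\RR)$, so $B\D$ is split at the identity place of $K\D$. Using Theorem \ref{thm:quatprops}(2) together with the identifications in Theorem \ref{thm:quatprops}(3) (applied, for instance, via $(i,j) \mapsto (i, ij)$ iteratively if needed), one can choose structure constants so that $B\D \cong \Big(\frac{a,b}{K\D}\Big)$ with $a,b > 0$ under the fixed embedding $K\D \hookrightarrow \RR$. Since $K\D \subset F \subset \RR$, this also means $a,b \in F^+$.

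Then I would promote this to $B\G$ via scalar extension. Both $B\D$ and $B\G$ are four-dimensional over their respective trace fields, since $\D$ and $\G$ are non-elementary. Any $K\D$-basis of $B\D$ lies in $\mat_2(\RR)$ and is $\RR$-linearly independent, hence $\CC$-linearly independent, hence $K\G$-linearly independent inside $\mat_2(\CC)$. Thus the $K\G$-span of $B\D$ is a four-dimensional $K\G$-subalgebra of $B\G$ and must coincide with $B\G$. This identifies $B\G \cong B\D \otimes_{K\D} K\G \cong \Big(\frac{a,b}{F(\sqrt{-d})}\Big)$, with $a,b,d \in F^+$, and Lemma \ref{lem:macram} then yields that the nontrivial element of $\Gal(K\G:F)$ acts freely on $\Ram(B\G)$, completing the verification.

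The main technical point requiring care is the dimension count yielding $B\G = B\D \otimes_{K\D} K\G$; one has to ensure that a $K\D$-basis of $B\D$ is genuinely $K\G$-linearly independent in $\mat_2(\CC)$, which is where the totally-geodesic hypothesis $B\D \subset \mat_2(\RR)$ does its real work. Everything else is routine manipulation of structure constants and a direct application of Lemma \ref{lem:macram}.
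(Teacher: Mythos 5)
Your argument is correct and follows essentially the same route as the paper: restrict to the Fuchsian subgroup $\D$ to obtain real structure constants for $B\D$, extend scalars to $K\G$ and use the four-dimensionality of $B\G$ to identify $B\G\cong\Big(\frac{a,b}{F(\sqrt{-d})}\Big)$, then invoke Lemma \ref{lem:macram}. The only (harmless) difference is that you arrange $a,b>0$ already over $K\D$ using splitness at the identity real place, whereas the paper allows real constants of either sign and repairs negativity afterwards via the replacement $a\mapsto -ad$ over $F(\sqrt{-d})$.
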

	
\begin{proof}
	Let $S\subset X$
		be a surface as in the hypothesis.
	Then $\pi_1(S)$
		has a Fuchsian representation $\D<\fu$
		and $\pi_1(X)$
		has a Kleinian representation $\G<\kl$
		such that $\D<\G$.
	Then $K\D\subset K\cap\RR$
		is a totally real subfield.
	Therefore $B\D\subset B\G$
		is a quaternion subalgebra over a subfield of $F$.
	Hence $\exists\ a,b\in F$
		so that $B\D=\Big(\frac{a,b}{K\D}\Big)$.
	Then $B\D\otimes_{K\D}K\G
		=\Big(\frac{a,b}{F(\sqrt{-d})}\Big)\subset B\G$,
		and since $B\G$
		is a $4$-dimensional
		vector space over the same field,
		we have $B\G=\Big(\frac{a,b}{F(\sqrt{-d})}\Big)$.
	Thus
		$X$
		is Macfarlane by Lemma \ref{lem:macram},
		where recall,
		as in the proof of that Lemma,
		if $a$
		(or $b$)
		is negative,
		we can replace it with $-da$
		(or $-db$).
\end{proof}

\begin{rem}
	With an immersion as above,
		the action $\mu_\G$
		as given in Corollary \ref{cor:macX}
		restricts to the action $\mu_\D$
		as given in Corollary \ref{cor:surf}.
	An example using this will be shown
		in \S\ref{sec:app sub:ex}.
\end{rem}

\subsection{Arithmetic Macfarlane Manifolds}
	\label{sec:macs sub:non-compact}

In this subsection,
	we construct examples of arithmetic Macfarlane manifolds
	and prove parts (1) and (2)
	of Theorem \ref{thm:exs}.
	
Pending the following arguments,
	notice that if $X$
	is the initial arithmetic manifold,
	the bound on the index of its cover by a Macfarlane manifold is at most
	$\big|H(X,\ZZ/2)\big|$
	because this is the index of the group $\Gamma^{(2)}$
	in $\Gamma$
	and,
	as discussed in \S\ref{sec:bg sub:arith},
	the invariant quaternion algebra of $\G$
	is the quaternion algebra of $\Gamma^{(2)}$.
The rest of the argument is separated into
	the non-compact and compact cases.
	
\subsubsection{\bf Non-compact Arithmetic Macfarlane Manifolds}
\label{sec:macs sub:non-compact sub:arith}
Recall from Theorem \ref{thm:invprops}
	that a non-compact manifold has a split quaternion algebra,
	thus the structure parameters can always be taken as $a=b=1$.
So the manifold will be Macfarlane provided it satisfies our condition
	on the trace field.
In the arithmetic case,
	this is easy to control.

\begin{lem}\label{lem:non-comp arith}
	Every arithmetic non-compact manifold is finitely covered by
		a Macfarlane manifold.
\end{lem}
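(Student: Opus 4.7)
The plan is to show that the finite cover $X^{(2)}$ of $X$ corresponding to the finite-index subgroup $\G^{(2)}<\G$ is itself Macfarlane. As recalled at the end of \S\ref{sec:bg sub:arith}, when $\G$ is arithmetic the group $\G^{(2)}$ is derived from a quaternion algebra, and its trace field and quaternion algebra coincide with the invariant ones of $\G$; that is, $K\G^{(2)}=k\G$ and $B\G^{(2)}\cong A\G$.

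I would then apply Theorem \ref{thm:invprops} in the non-compact arithmetic case: part (2) yields $k\G=\QQ(\sqrt{-d})$ for some $d\in\NN$, and part (1) yields $A\G\cong\big(\frac{1,1}{k\G}\big)\cong\mat_2(k\G)$. To verify the conditions of Definition \ref{defn:macB} for $B\G^{(2)}$, I take $F=\QQ$, so that $K\G^{(2)}=F(\sqrt{-d})$ with $d\in F^+$, establishing condition (1). For condition (2), since $A\G$ is split, Theorem \ref{thm:ram}(1) gives $\Ram(B\G^{(2)})=\O$, and the Galois action on the empty set vacuously preserves it with no fixed points. Hence $X^{(2)}$ is a Macfarlane manifold that finitely covers $X$.

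There is no genuine obstacle here; the lemma is essentially immediate from the preliminaries, because the non-compact arithmetic hypothesis already imposes exactly the structure (an imaginary quadratic extension of $\QQ$ together with a split quaternion algebra) that trivially fulfills both Macfarlane conditions. The only subtle point is that one must pass from $\G$ to $\G^{(2)}$ to guarantee that the trace field itself, and not merely the invariant trace field, is of the form $\QQ(\sqrt{-d})$; in general $K\G$ can be a proper extension of $k\G$ and so need not be imaginary quadratic. The index bound $\big|H(X,\ZZ/2)\big|$ required for Theorem \ref{thm:exs}(1) then follows from the observation, already recorded at the start of this subsection, that $\G/\G^{(2)}$ has exponent $2$ and is therefore a quotient of the mod-$2$ abelianization of $\G$.
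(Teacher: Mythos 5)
Your proof is correct and follows essentially the same route as the paper: both arguments reduce to Theorem \ref{thm:invprops}, which in the non-compact arithmetic case forces the invariant trace field to be $\QQ(\sqrt{-d})$ and the invariant quaternion algebra to be split, and both obtain the finite cover by passing to $\G^{(2)}$ (the paper phrases this as commensurability with a Bianchi group and checks the Macfarlane condition via Lemma \ref{lem:Hilbert} with $a=b=1$, while you check Definition \ref{defn:macB} directly via $\Ram=\emptyset$; these are equivalent by Lemma \ref{lem:macram}). Your explicit remark that one must pass to $\G^{(2)}$ because $K\G$ need not itself be imaginary quadratic is exactly the caveat the paper records after its proof, using m009 as the counterexample.
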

	
\begin{proof}
	Let $\G$
		be arithmetic and non-cocompact.
	Then $\G$
		is commensurable to a Bianchi group
		$\PSL_2(\ZZ_{\QQ(\sqrt{-d})})$
		($d\in\NN$ squarefree),
		which has quaternion algebra $\quatQd$
		by Theorem \ref{thm:invprops}
		(see also \cite[\S8.2]{MaclachlanReid2003}).
	This is Macfarlane by Lemma \ref{lem:Hilbert}.
\end{proof}

If $\G$
	additionally is derived from a quaternion algebra,
	then $B\G$
	will also be $\quatQd$
	and it will be Macfarlane itself.
Otherwise we can only say that $A\G=\quatQd$,
	which is why above we work up to commensurability.
It is very common for these groups to satisfy $K\G=k\G$
	(so that $B\G=A\G$,
	making them Macfarlane),
	but this is not always the case \cite{Reid1990}.
For example,
	the arithmetic manifold m009
	in the cusped census has invariant trace field $\QQ(\sqrt{-7})$
	but its trace field is $\QQ\Big(\sqrt{\frac{5-\sqrt{-7}}{2}}\Big)$,
	which contains no degree $2$
	subfield with a real place
	(making it not Macfarlane).

\begin{ex}\samepage
	Arithmetic link complements are Macfarlane
		by Proposition \ref{prop:A=B}.
	\begin{enumerate}
		\item The figure-8 knot complement is Macfarlane,
				with trace field $\QQ(\sqrt{-3})$.
		\item The Whitehead link is Macfarlane,
				with trace field $\QQ(\sqrt{-1})$.
		\item The six-component chain link is Macfarlane,
				with trace field $\QQ(\sqrt{-15})$.
	\end{enumerate}
	For more information on arithmetic link complements,
		see \cite{Hatcher1983,BakerReid2002,
			GrunewaldSchwermer1981}
		and \cite[\S9.2]{MaclachlanReid2003}.
\end{ex}

\subsubsection{\bf Compact Arithmetic Macfarlane Manifolds}

Quaternion algebras of compact manifolds can be split
	or can be among the infinitely many ramified possibilities
	over each possible trace field.
If we look at groups derived from quaternion algebras,
	we can easily construct examples of compact arithmetic
	Macfarlane manifolds in the usual way that these groups are formed.
	
Start with a concrete number field $K=F(\sqrt{-d})$
	with $F\subset\RR$,
	$d\in F^+$,
	where $K$
	has a unique complex place.
Choose a pair $a,b\in K\cap\RR$
	such that $\quatK$
	is ramified,
	take an order $\Ord\subset\quatK$
	and let $\G=\Proj\Ord^1$,
	$X=\mathcal{I}^3/\G$.
Naturally,
	not every isomorphism class of quaternion algebras over $K$
	admits such a choice of $a,b$
	but there are infinitely many that do.
For instance,
	for each split non-Archimedean place of $F$
	that splits into a pair $v_1,v_2$
	of places of $K$
	(of which there are infinitely many,
	by the Chebotarev density theorem),
	choose the quaternion algebra $B$
	over $K$
	with $\Ram(B)=\{v_1,v_2\}$.
Thus there are infinitely many non-commensurable
	compact arithmetic Macfarlane manifolds over each such $K$.
	
We now look at a geometric condition that gives compact arithmetic
	Macfarlane manifolds.

\begin{lem}\label{lem:comp arith}
	If $X$
		is arithmetic,
		compact
		and contains an immersed closed totally-geodesic surface,
		then $X$
		is finitely covered by a Macfarlane manifold.
\end{lem}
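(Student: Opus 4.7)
The plan is to apply Lemma \ref{lem:Hilbert} to the invariant quaternion algebra $A\G$ of $X$, and then pass to the cover corresponding to $\G^{(2)}$, which will be the desired Macfarlane cover. The essential input is the Maclachlan--Reid classification of arithmetic hyperbolic $3$-manifolds that contain totally-geodesic surfaces (see \cite[\S9.5]{MaclachlanReid2003}).

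Because $X$ is arithmetic, compact, and contains an immersed closed totally-geodesic surface, this classification yields a totally real subfield $F\subset k\G$ with $[k\G:F]=2$, a value $d\in F^+$ such that $k\G=F(\sqrt{-d})$, and a quaternion algebra $A_0$ over $F$ with $A\G\cong A_0\otimes_F k\G$. One derives this by lifting the subsurface to a Fuchsian subgroup $\D<\G$, using that $k\D\subset k\G\cap\RR$ while $k\G$ has exactly one complex place so that $[k\G:k\G\cap\RR]=2$, and observing (via the central simplicity of quaternion algebras) that the induced map $A\D\otimes_{k\D}k\G\to A\G$ is a $k\G$-algebra isomorphism. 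Writing $A_0\cong\left(\frac{a,b}{F}\right)$ for some $a,b\in F^\times$, we obtain $A\G\cong\left(\frac{a,b}{F(\sqrt{-d})}\right)$.

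Next I would arrange the structure constants to be positive. If $a<0$ under the fixed embedding $F\hookrightarrow\RR$, replace the pair $(a,b)$ by $(-ad,b)$: the resulting algebra over $F(\sqrt{-d})$ is isomorphic to the original by Theorem \ref{thm:quatprops}(3), since $-d=(\sqrt{-d})^2$ is a square in $k\G$; and $-ad>0$ because $a<0$ and $d>0$. Similarly adjust $b$ if necessary. After these adjustments, $A\G\cong\left(\frac{a,b}{F(\sqrt{-d})}\right)$ with $a,b,d\in F^+$ and $F\subset\RR$, so Lemma \ref{lem:Hilbert} applies and shows that $A\G$ is Macfarlane.

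Finally, let $X^{(2)}$ be the cover of $X$ corresponding to $\G^{(2)}$. Since $B\G^{(2)}=A\G$ is Macfarlane, $X^{(2)}$ is Macfarlane. The index $[\G:\G^{(2)}]$ is at most $|H(X,\ZZ/2)|$ because every element of $\G/\G^{(2)}$ has order dividing $2$, which forces this quotient to be an elementary abelian $2$-group, hence a quotient of $\G^{\mathrm{ab}}\otimes\ZZ/2\cong H(X,\ZZ/2)$. The main obstacle is the Maclachlan--Reid structural step producing the descent $A\G\cong A_0\otimes_F k\G$ with $F$ totally real of index $2$ in $k\G$; everything else is a direct application of earlier results together with a sign adjustment in the structure constants.
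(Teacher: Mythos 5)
Your proof is correct and follows essentially the same route as the paper: both hinge on the Maclachlan--Reid result (Theorem 9.5.2) that the Fuchsian subgroup forces $k\Delta=k\Gamma\cap\RR$ to be totally real of index $2$ in $k\Gamma$, descend the invariant quaternion algebra to structure constants in that real subfield, adjust signs via Theorem \ref{thm:quatprops}(3), and pass to the cover corresponding to $\Gamma^{(2)}$. The only cosmetic difference is that you inline the tensor-up argument and sign adjustment directly for $A\Gamma$, whereas the paper packages that step as an application of Proposition \ref{prop:subsurf} to $\Gamma^{(2)}$.
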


\begin{proof}
	Let $S\subset X$
		be a surface as in the hypothesis.
	Then $\pi_1(S)\cong\Delta'<\Gamma$
		for some Fuchsian group $\Delta'$.
	Then by Theorem 9.5.2 of \cite{MaclachlanReid2003},
		$\Delta'<\Delta$
		for some arithmetic Fuchsian group $\Delta$
		satisfying $[k\Gamma:k\Delta]=2$
		and $k\Delta=k\Gamma\cap\RR$.
	This implies that $K\Gamma^{(2)}=K\Delta^{(2)}(\sqrt{-d})$
		for some $d\in K\Delta^{(2)}$.
	Lastly,
		we know that $\G^{(2)}$
		is a finite index subgroup of $\G$,
		i.e. $X$
		finitely covers $\HS^3/\G^{(2)}$,
		which contains the immersed closed totally geodesic surface
		corresponding to $\Delta'^{(2)}$,
		so apply Proposition \ref{prop:subsurf}
		to $\Gamma^{(2)}$.
\end{proof}

\subsection{Non-arithmetic Macfarlane manifolds}

We now complete the proof of Theorem \ref{thm:exs}
	by providing an abundance of commensurability classes
	of non-arithmetic examples.

\begin{lem}\label{lem:non-comp non-arith}
	There are infinitely many commensurability classes
		of non-compact non-arithmetic Macfarlane manifolds.
\end{lem}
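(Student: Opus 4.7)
The plan is to reduce the Macfarlane condition to a trace-field condition, then trade that for the existence of an immersed totally geodesic subsurface, and finally to produce the infinite family via a hybrid construction.

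\textbf{Reductions.} Since the manifolds are non-compact, Theorem~\ref{thm:invprops}(1) gives $B\G\cong\mat_2(K\G)$, so $\Ram(B\G)=\emptyset$ and condition (2) of Definition~\ref{defn:macB} is vacuous. Hence a non-compact $X$ is Macfarlane if and only if $K\G=F(\sqrt{-d})$ for some $F\subset\RR$ and $d\in F^+$. By a standard result of Maclachlan--Reid (cf.~\cite[\S5.3]{MaclachlanReid2003}), if $\G$ contains a non-elementary Fuchsian subgroup $\D$ then $k\D=k\G\cap\RR$ with $[k\G:k\D]=2$, so $k\G=k\D(\sqrt{-d})$ for some $d>0$ in $k\D$. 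The cover associated to $\G^{(2)}$ has trace field $K\G^{(2)}=k\G$ by Definition~\ref{defn:itq}, is still non-compact, is non-arithmetic exactly when $X$ is (arithmeticity is a commensurability invariant), and lies in the commensurability class of $X$. Thus it suffices to exhibit infinitely many commensurability classes of non-arithmetic cusped $3$-manifolds, each containing an immersed closed totally geodesic surface.

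\textbf{Producing the infinite family.} I would apply a hybrid/interbreeding construction in the spirit of Gromov--Piatetski-Shapiro and Agol: take two cusped arithmetic Kleinian orbifolds from distinct commensurability classes that share a common immersed totally geodesic surface (for example, a pair of Bianchi orbifolds whose embedded Fuchsian subgroups share a common totally real invariant trace field), cut each along the common surface, and re-glue them in a mixed fashion. For generic gluings the resulting hybrid is non-arithmetic --- by Margulis's arithmeticity dichotomy, the commensurator of a non-arithmetic lattice is itself a lattice while for an arithmetic one it is dense, and the hybrid cannot inherit a dense commensurator from two non-commensurable arithmetic factors --- and it is still non-compact and still contains the common geodesic surface. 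A torsion-free finite cover produces a manifold. Varying the pair of arithmetic inputs across infinitely many distinct commensurability classes of Bianchi orbifolds yields infinitely many pairwise non-commensurable outputs, distinguished by invariants such as covolume or invariant trace field.

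\textbf{Main obstacle.} The delicate step is the third: verifying that the hybrids are non-arithmetic via Margulis's criterion and that distinct input pairs yield distinct commensurability classes in the non-compact setting. This requires careful tracking of the commensurator and the invariant trace field of the hybrid, both of which depend on the arithmetic inputs and on the gluing data.
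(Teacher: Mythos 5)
Your opening reduction is correct and matches the paper: non-compactness forces $B\G$ to be split (Theorem \ref{thm:invprops}(1)), so the ramification condition is vacuous and everything comes down to the trace field. But the second reduction has a real gap. The statement that a non-elementary Fuchsian subgroup $\D<\G$ satisfies $k\D=k\G\cap\RR$ with $[k\G:k\D]=2$ is \emph{not} a general theorem in \cite[\S5.3]{MaclachlanReid2003}; what holds in general is only that $k\D$ is a real subfield of $k\G$. The index-two statement is Theorem 9.5.2 of that book and requires $\G$ to be arithmetic --- which is exactly why the paper invokes it only in Lemma \ref{lem:comp arith}, and why Proposition \ref{prop:subsurf} takes the shape of the trace field as a separate hypothesis rather than deducing it from the subsurface. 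Since the manifolds you want to produce are non-arithmetic, you cannot conclude ``contains a totally geodesic surface $\Rightarrow$ trace field is $F(\sqrt{-d})$''; you must compute the trace field of each hybrid directly.

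That computation, together with non-arithmeticity and the infinitude of commensurability classes, is precisely the content of the lemma, and your proposal defers all of it to the ``delicate step.'' The commensurator argument you sketch for non-arithmeticity of the hybrid is the hard Gromov--Piatetski-Shapiro route and is not actually carried out (``cannot inherit a dense commensurator'' needs proof); in the cusped setting there is a much cheaper criterion, namely Theorem \ref{thm:invprops}(2): if the invariant trace field is not $\QQ(\sqrt{-d})$ with $d\in\NN$, the cusped manifold is non-arithmetic. The paper avoids the entire construction by citing Chesebro--DeBlois, who already produce infinitely many pairwise non-commensurable \emph{link complements} with invariant trace field $\QQ(\sqrt{-1},\sqrt{2})$ by gluing along totally geodesic $4$-punctured spheres. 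That field is $\QQ(\sqrt2)(\sqrt{-1})=F(\sqrt{-d})$ with $F=\QQ(\sqrt2)$, non-arithmeticity is immediate from Theorem \ref{thm:invprops}(2), and --- a point your proposal handles by passing to $\G^{(2)}$, but which the paper gets for free --- Proposition \ref{prop:A=B}(2) gives $K\G=k\G$ because they are link complements, so the manifolds themselves (not just covers of them) are Macfarlane. To salvage your version you would need to verify, for your Bianchi hybrids, that the trace field is exactly the compositum $\QQ(\sqrt{-d_1},\sqrt{-d_2})$ and that infinitely many such compositums give infinitely many commensurability classes; as written, these claims are asserted rather than proved.
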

\begin{proof}
	Recall that the quaternion algebra of a non-compact manifold
		is necessarily split (Theorem \ref{thm:invprops}),
		so it suffices to satisfy the condition on the trace field.
		
	In \cite{ChesebroDebois2014},
		an infinite class of non-commensurable link complements
		are generated,
		all having invariant trace field $\QQ(\sqrt{-1},\sqrt{2})$,
		by gluing along totally-geodesic $4$-punctured spheres.
	Since this field is not of the form $\QQ(\sqrt{-d})$
		with $d\in\NN$
		and these manifolds are non-compact,
		they are non-arithmetic.
	Since they are link complements,
		by Proposition \ref{prop:A=B}
		their trace fields are also $\QQ(\sqrt{-1},\sqrt{2})$,
		which is of the desired form.
\end{proof}


To realize non-arithmetic compact Macfarlane manifolds,
	we start some with arithmetic compact
	Macfarlane manifold with immersed totally-geodesic subsurfaces.
Then we apply the technique of interbreeding
	introduced by Gromov and Piatetski-Shapiro
	\cite{GromovPiatetski-Shapiro1987}.
This entails gluing together a pair of
	non-commensurable arithmetic manifolds
	along a pair of totally-geodesic and isometric subsurfaces,
	resulting in a non-arithmetic manifold.
We use a variation on this technique introduced by Agol \cite{Agol2006}
	called inbreeding,
	whereby one glues together a pair of geodesic subsurfaces
	bounding non-commensurable submanifolds
	of the same arithmetic manifold.

\begin{lem}\label{lem:non-arith comp}
	For every arithmetic Macfarlane manifold $X$
		containing an immersed closed
		totally-geodesic surface,
		there exist infinitely many commensurability classes of
		non-arithmetic Macfarlane manifolds
		with the same quaternion algebra as $X$.
\end{lem}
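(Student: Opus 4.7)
The plan is to apply Agol's inbreeding technique to $X$ and then argue that the arithmetic invariants are preserved along the way. First, since $X$ is arithmetic and contains an immersed closed totally-geodesic surface $S$, a classical consequence of the correspondence between totally-geodesic surfaces in $X$ and quaternion subalgebras of $A\G$ over totally real subfields is that $X$ is covered by an arithmetic manifold $X'$ containing infinitely many pairwise non-commensurable closed embedded totally-geodesic surfaces. In particular, we can choose two such surfaces $S_1, S_2 \subset X'$ that bound non-commensurable submanifolds $M_1, M_2 \subset X'$.

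Next, following \cite{Agol2006}, for each pair of isometric geodesic boundary components chosen from such cut pieces we glue by an identification to obtain a closed hyperbolic $3$-manifold $Y$. The Gromov--Piatetski-Shapiro / Agol argument shows $Y$ is non-arithmetic: if it were arithmetic, the two sides would give commensurable arithmetic manifolds, contradicting the choice of $S_1,S_2$. By varying the choice of non-commensurable geodesic surfaces (and, if needed, the embedded cover $X'$), one obtains an infinite family $\{Y_n\}$ whose covolumes can be made mutually incommensurable, hence yielding infinitely many commensurability classes.

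The second stage is to verify that each $Y_n$ has the right arithmetic invariants. Because $\pi_1(Y_n)$ is an amalgamated product of finite-index subgroups of $\pi_1(M_1)$ and $\pi_1(M_2)$, all sitting inside $\pi_1(X')$, every trace in $\pi_1(Y_n)$ is a trace in $\pi_1(X')$, so $K\pi_1(Y_n) \subseteq K\pi_1(X')$. On the other hand, $\pi_1(Y_n)$ contains finite-index subgroups of both $\pi_1(M_1)$ and $\pi_1(M_2)$, which already generate the invariant trace field $k\G$ and invariant quaternion algebra $A\G$. Passing to $\pi_1(Y_n)^{(2)}$ we conclude $k\pi_1(Y_n) = k\G$ and $A\pi_1(Y_n) \cong A\G$. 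Since $X$ is Macfarlane, $A\G$ satisfies the conditions of Definition \ref{defn:macB}, so the manifold $\widetilde{Y}_n := \HH^3 / \pi_1(Y_n)^{(2)}$ is a Macfarlane manifold in the commensurability class of $Y_n$ with the same (invariant) quaternion algebra as $X$, as desired.

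The main obstacle is arranging the inbreeding so as to simultaneously preserve the invariant quaternion algebra and produce infinitely many commensurability classes. The preservation is essentially automatic from the amalgamated-product description of $\pi_1(Y_n)$, but showing the $Y_n$ are pairwise non-commensurable requires care: one uses that the admissible lengths of closed geodesics along which cutting is performed form an infinite discrete set in the commensurator, and distinct choices produce distinct covolumes. Compared to the classical Gromov--Piatetski-Shapiro construction (which requires two arithmetic seeds), the inbreeding variant is essential here because it keeps the entire construction inside the single algebra $A\G$, which is exactly what guarantees the Macfarlane condition is inherited.
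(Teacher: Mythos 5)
Your overall strategy (Agol's inbreeding applied inside a single arithmetic Macfarlane manifold) is the same as the paper's, but two of the three key steps have genuine gaps. First, your justification of non-arithmeticity is the Gromov--Piatetski-Shapiro \emph{interbreeding} argument (``if $Y$ were arithmetic, the two sides would give commensurable arithmetic manifolds, contradicting the choice of $S_1,S_2$''), and that argument does not apply to inbreeding: both pieces $M_1,M_2$ sit inside the \emph{same} arithmetic manifold $X'$, so their fundamental groups lie in a common arithmetic lattice and no contradiction arises from commensurability of the sides. In the inbreeding setting one must argue differently; the paper's route is that the gluing requires a deformation of the two non-commensurable surface lattices, and the resulting hybrid contains elements of non-integral trace (equivalently, a systole too short for an arithmetic manifold), which rules out arithmeticity. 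Second, your mechanism for producing infinitely many commensurability classes --- ``covolumes can be made mutually incommensurable'' --- is both unjustified and insufficient as stated: commensurable manifolds have covolumes with \emph{rational} ratio, so distinct covolumes do not separate commensurability classes, and arranging genuinely incommensurable covolumes is not something the construction hands you. The paper instead uses that the inbreeding can be performed so that the injectivity radius of $X_{\ell,m}$ tends to $0$, while Margulis' Lemma bounds the injectivity radius from below within any fixed commensurability class of non-arithmetic manifolds; hence the sequence must change commensurability class infinitely often.

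A smaller issue: your verification of the invariants lands on $k$ and $A$ of $\pi_1(Y_n)$, and you then pass to $\pi_1(Y_n)^{(2)}$, so what you produce are non-arithmetic manifolds with quaternion algebra $A\Gamma$. The lemma asserts the same quaternion algebra as $X$, i.e.\ $B\Gamma$, which need not equal $A\Gamma$. The paper gets $K\Gamma=K\Gamma_{1,2}$ and $B\Gamma\cong B\Gamma_{1,2}$ directly, by observing that the reflection through the identified subsurface lies in the commensurator of $\Gamma$. Also note that promoting the immersed surfaces of the hypothesis to embedded ones in a finite cover uses LERF (Agol's theorem), not merely the correspondence with quaternion subalgebras; you should cite that explicitly.
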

\begin{proof}
	Let $X$
		be as in the statement
		and $\G\cong\pi_1(X)$
		be a Kleinian group.
	Then $X$ contains infinitely many commensurability classes of
		immersed closed totally-geodesic arithmetic
		hyperbolic subsurfaces \cite[\S9.5]{MaclachlanReid2003}.
	Since Kleinian groups are LERF
		(locally extended residually finite) \cite{Agol2013},
		among these are infinitely many pairs of surfaces
		where each pair is isometric but noncommensurable
		\cite[\S4]{Agol2006}.	
	Let $S_1$
		and $S_2$
		be one of these pairs.
	Since these are arithmetic,
		they each correspond to a lattice in a quaternion subalgebra
		over a real subfield of $F$,
		and these lattices are noncommensurable
		because of the noncommensurability of $S_1$
		with $S_2$.

	Deform the lattices so that $S_1$
		and $S_2$
		can be glued together via the identification map
		$f:S_1\rightarrow S_2$
		as in \cite{Agol2006}
		and form the amalgamated product $X_{1,2}:=X\ast_fX$.
	Since we did this by a deformation and gluing of noncommensurable
		lattices,
		$X_{1,2}$
		will have a systole inducing a matrix
		of non-integral trace in any discrete faithful
		Kleinian representation of $\G_{1,2}:=\pi_1(X_{1,2})$.
	Thus $X_{1,2}$	
		is non-arithmetic.
	However the reflection involution through the identified subsurface
		in $X_{1,2}$
		lies in the commensurator of $\G$.
	So we have $K\G=K\G_{1,2}$
		and $B\G\cong B\G_{1,2}$
		over $K\G$.
		
	Lastly,
		by \cite[\S4]{Agol2006},
		there exists an infinite sequence of choices
		for pairs of surfaces $S_\ell, S_m$
		as above so that the injectivity radius of $X_{\ell,m}$
		gets arbitrarily small.
	But by Margulis' Lemma,
		there is a lower bound to the injectivity radius
		of any class of commensurable
		non-arithmetic manifolds.
	Thus in our sequence $\{X_{\ell,m}\}$,
		we enter a new commensurability class infinitely often
		as this radius approaches $0$,
		giving infinitely many non-commensurable non-arithmetic
		manifolds all with quaternion algebra $B\G$
		(up to $K\G$-algebra isomorphism).
\end{proof}

\section{Applications}\label{sec:app}

In this section,
	we give an example of a computational
	advantage of the quaternion hyperboloid model.
To encourage further exploration of this and other potential applications,
	we first provide a way of transferring data between the current approach
	and the more conventional upper half-space model.
Then we introduce a tool that improves algorithms
	for finding Dirichlet domains 
	of Macfarlane manifolds.

Throughout this section,
	let $X$
	be Macfarlane.
Choose $\G\cong\pi_1(X)$
	so that $\B:=B\G=\quatFd$,
	$F\subset\RR$
	and $a,b,d\in F^+$
	(guaranteed by Corollary \ref{cor:abd}),
	then identify $\G$
	with its natural image in $\Proj\B^1$.
Let $\dagger$
	be the unique involution on $\B$
	so that $\Sym(\B,\dagger)$ is the Macfarlane space
	$\M$.
Then
$$\M=F\oplus Fi\oplus Fj\oplus\sqrt{-d}Fij$$
	and $\HM_\G=\M_+^1$
	is a hyperboloid model for $X$,
	and recall the group action from Corollary \ref{cor:macX}:
	$$\mu_\G:\G\times\HM_\G, (\g,p)\mapsto\g p\g\ct.$$

\subsection{Comparison with the Upper Half-Space Model}
%

The following is an adaptation of
	\cite[Theorem 5.2]{Quinn2017a},
	so we omit the computational details.
	
The map
\begin{align}\label{rhoB}
	\rho_{\B}:\B\hookrightarrow
		\mat_2\big(K(\sqrt{a},\sqrt{b})\big),\quad
		w+xi+yj+zij\mapsto
		\begin{pmatrix}
			w-x\sqrt{a} & y\sqrt{b}-z\sqrt{ab}\\
			y\sqrt{b}+z\sqrt{ab} & w+x\sqrt{a}
		\end{pmatrix}
\end{align}
	is an injective $F(\sqrt{-d})$-algebra
	homomorphism,
	and let us also write $\rho_\B^{-1}$
	to mean the inverse of the corestriction
	$\rho|^{\rho(\B)}:
		\B\overset{\cong}{\rightarrow}\rho_\B(\B):q\mapsto\rho_\B(q)$.
For future reference,
	this is
\begin{align}\label{rhoBinverse}
	\rho_\B^{-1}:\B\rightarrow
		\mathrm{M}_2\big(K(\sqrt a,\sqrt b)\big):\quad
		\begin{pmatrix}
			s & t\\
			u & v
		\end{pmatrix}\mapsto
		\frac{v+s}{2}+\frac{v-s}{2\sqrt a}i+\frac{u+t}{2\sqrt b}
			+\frac{u-t}{2\sqrt{ab}}.
\end{align}
Write the upper half-space model as the subspace
	$\HS^3=\RR\oplus\RR I\oplus\RR^+J$
	of Hamilton's quaternions,
	where $I^2=J^2=-1$
	and $IJ=-JI$.
Then 
\begin{align}\label{iota}
	\iota_\G:\HM_\G\rightarrow\HS^3,\quad
		w+xi+yj+\sqrt{-d} \, zij\mapsto
			\frac{y\sqrt{b} \, +z\sqrt{abd} \, I+J}{w+x\sqrt{a} \, }
\end{align}
	is an isometry such that the \Mob
	action $\G\times\HS^3\rightarrow\HS^3$
	is equal to $\iota
		\Big(\mu_\G\big(\rho_\A^{-1}(\cdot),\iota^{-1}(\cdot)\big)\Big)$.
That is,
	our quaternion representation $\mu_\G$
	of the group action from Corollary \ref{cor:macX}
	transfers to the usual \Mob
	action via $\iota$
	and $\rho_\B$.

\subsection{Quaternion Dirichlet domains}\label{sec:app sub:dir}

In this subsection we will see
	a shortcut to a method of computing Dirichlet domains
	introduced by Page \cite{Page2015},
	for Macfarlane manifolds.
We focus on Page's algorithm because earlier ones
	have been specific to the non-compact arithmetic case
	\cite{Hurwitz1881,Bianchi1892,Swan1971,Riley1983},
	or have required either arithmeticity 
	 \cite{CorralesJespersLealRio2004,Voight2009,Page2015}
	or compactness
	\cite{EpsteinPenner1988,Manning2002}
	whereas,
	as we showed in \S\ref{sec:man},
	Macfarlane manifolds include examples from every combination
	of arithmetic and non-arithmetic
	with compact and non-compact.
	
While Page's algorithm focuses on arithmetic
	Kleinian groups
	using their characterization within quaternion orders,
	it extends easily to arbitrary
	(complete orientable finite-covolume Kleinian)
	groups,
	as follows.
Given a finite set of matrix generators for the group,
	Page \cite[\S2.4.1]{Page2015}
	indicates an efficient way of listing the elements of the group
	in order of increasing Frobenius norm.
This has the effect of locating sides of a Dirichlet domain centered at
	$(0,0,0)$
	in the Poincar\'e ball model,
	in order of increasing distance from the center.

\subsubsection{\bf Page's algorithm translated to \boldmath{$\HM_\G$}}

Via the usual identification of the Poincar\'e ball model with $\HS^3$,
	the point $(0,0,0)$
	maps to $J\in\HS^3$,
	and $\iota_\G^{-1}$ from (\ref{iota})
	maps $J$
	to the point $1\in\HM_\G$,
	which is the canonical choice for our center.
So we will be computing the (canonical)
	Dirichlet domain $\Dir_\G(1)\subset\HM_\G$.

An isometry $\g\in\G$
	acts on $1$
	via $\mu_\G(\g,1)=\g\g\ct$,
	and we can interpret the Frobenius norm of $\g$
	as the square root of the trace of this image,
	as follows.
	
\begin{prop}\samepage\label{prop:Frob}
		If $\g=\mtxr\subset\kl$,
			then the trace of $\rho^{-1}_\B(\g)(1)$
			is
			\begin{equation*}
				|r|^2+|s|^2+|u|^2+|v|^2,
			\end{equation*}
			the square of the Frobenius norm of $\g$.
\end{prop}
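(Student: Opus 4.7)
The plan is to observe that the involution $\dagger$ on $\B$ corresponds, under the matrix embedding $\rho_\B$, to the conjugate transpose of matrices; once this is established, the trace computation is essentially immediate.

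First I would verify directly from formulas (\ref{rhoB}) and (\ref{dagger}) that $\rho_\B(q\ct) = \overline{\rho_\B(q)}^{\top}$ for every $q \in \B$. Writing $q = w + xi + yj + zij$, this amounts to applying $\rho_\B$ to $q\ct = \overline{w} + \overline{x}i + \overline{y}j - \overline{z}ij$ and comparing entry-by-entry with the conjugate transpose of $\rho_\B(q)$. The check relies on $a, b \in F^+ \subset \RR^+$, so that $\sqrt{a}, \sqrt{b}, \sqrt{ab}$ are real and fixed by complex conjugation; the sign change on the $ij$ coefficient introduced by $\dagger$ is precisely what is needed to match the swap of off-diagonal entries under transposition.

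By Proposition \ref{prop:tracenorm}, $\rho_\B$ intertwines the reduced quaternion trace with the matrix trace, so setting $q := \rho_\B^{-1}(\g)$ we obtain
\begin{equation*}
    \tr\!\bigl(\rho_\B^{-1}(\g)(1)\bigr) \;=\; \tr(q\,q\ct) \;=\; \tr\!\bigl(\rho_\B(q)\,\rho_\B(q\ct)\bigr) \;=\; \tr\!\bigl(\g\,\overline{\g}^{\top}\bigr),
\end{equation*}
and a one-line $2\times 2$ matrix multiplication shows that $\g\,\overline{\g}^{\top}$ has diagonal entries $|r|^2 + |s|^2$ and $|u|^2 + |v|^2$, whence its trace equals $|r|^2+|s|^2+|u|^2+|v|^2$, which is by definition the square of the Frobenius norm of $\g$. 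The only step requiring any real care is the entry-by-entry verification in the first paragraph, and this is the main (though mild) obstacle; everything else follows by the homomorphism property of $\rho_\B$ and a single matrix product.
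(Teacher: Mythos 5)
Your proposal is correct and follows essentially the same route as the paper: the paper's proof simply computes $\g\g\ct$ as the product of $\g$ with its conjugate transpose and reads off the diagonal. The only difference is that you explicitly verify from (\ref{rhoB}) and (\ref{dagger}) that $\rho_\B$ intertwines $\dagger$ with the conjugate transpose, a fact the paper uses tacitly; that check (which indeed hinges on $\sqrt a,\sqrt b,\sqrt{ab}$ being real) goes through exactly as you describe.
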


\begin{proof}
	
	With $\g$
		as in the statement,
	\begin{align*}
		\rho_\B\big(\rho_\B^{-1}(\g)(1)\big)=\g\g\ct=\mtxr\begin{pmatrix}
			\overline{r} & \overline{u}\\
			\overline{s} & \overline{v}
		\end{pmatrix}
		=\begin{pmatrix}
			|r|^2+|s|^2 & r\overline{u}+s\overline{v}\\
			u\overline{r}+v\overline{s} & |u|^2+|v|^2
		\end{pmatrix}.
	\end{align*}
	Then $\tr\Big(\rho_\B^{-1}\big(\mu_\G(\g,1)\big)\Big)
		=\tr(\g\g\ct)
		=|r|^2+|s|^2+|u|^2+|v|^2$.
\end{proof}

The set of points in $\HM_\G$
	of some fixed trace $t$
	forms an ellipsoid,
	in particular
\begin{equation}\label{V_t}
	\bigg\{\frac{t}{2}+xi+yj+z\sqrt{-d}ij \;\; \bigg| \;\;
		x,y,z\in F,\ 4ax^2+4by^2+4abdz^2
			=t^2-4
		\bigg\}.
\end{equation}
Since $\G$
	is discrete,
	the set of points lying in each ellipsoid (which is compact)
	must be finite
	(Figure \ref{fig:Vt}
	illustrates a $2$-dimensional
	version of this).
\begin{figure}[h!]\centering
	\includegraphics[scale=0.8]{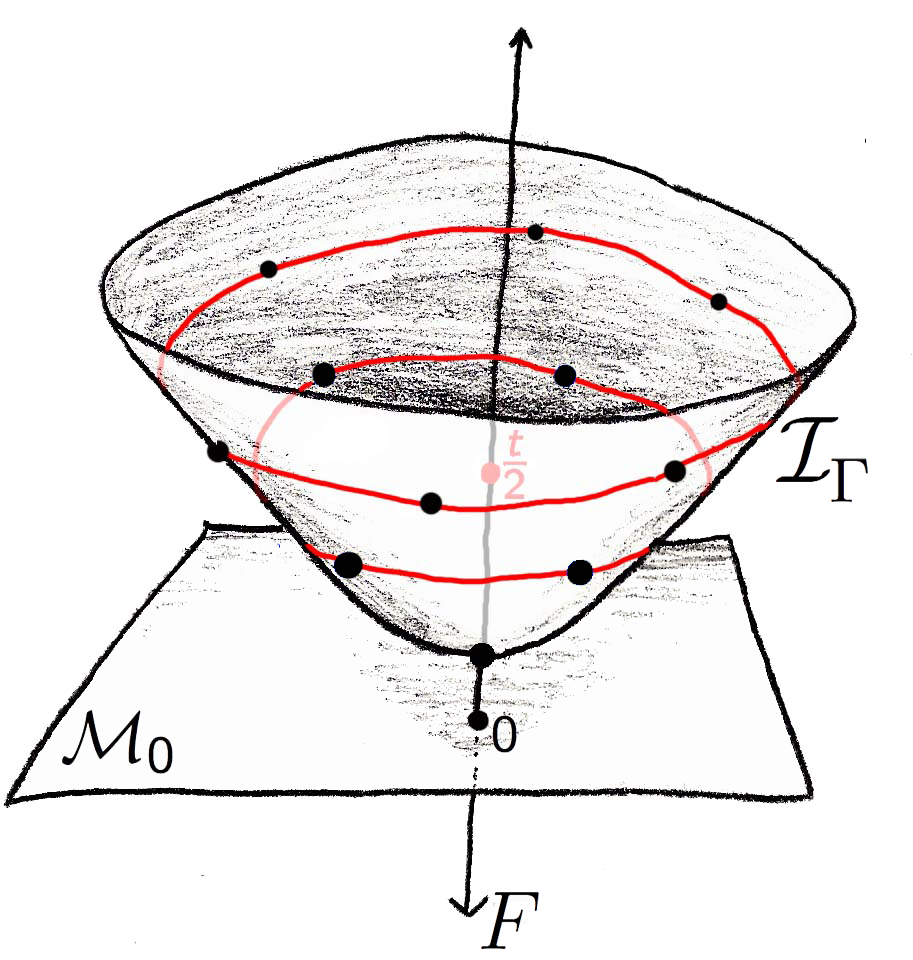}
	\caption{Orbit points by trace.}
	\label{fig:Vt}
\end{figure}
Thus,
	in our setup,
	Page's algorithm translates to searching for points in
	the orbit $\Orb_\G(1)=\{\g\g\ct\mid\g\in\G\}$
	in order of increasing trace,
	and computing the perpendicular bisector of each one
	until all sides of the Dirichlet domain $\Dir_\G(1)$
	have been found.
Details and efficiency analysis of this can be found in \cite{Page2015}.

\subsubsection{\bf Dual isometries and points}

We now explain our new contribution to the algorithm.
Due to the quaternionic structure,
	points on $\HM_\G$
	are also isometries of $\HM_\G$.
Likewise,
	elements of the group $\G$
	can occur among these points.
In particular,
	$\G\cap\HM_\G=\{\g\in\G\mid\g\ct=\g\}$.
These isometries take a special form that makes it especially easy
	to compute their perpendicular bisectors,
	as follows.
	
\begin{thm}\label{thm:GcapI}\samepage
	\begin{enumerate}
		\item[]
		\item
			The elements of $\G\cap\HM_\G\smallsetminus\{1\}$
				are precisely the purely hyperbolic isometries in $\G$
				that fix geodesics passing through $1$.
		\item
			For each $\g\in\G\cap\HM_\G$,
				the midpoint of between $1$
				and $\mu_\G(\g,1)$
				is $\g$.
		\item
			If $\G$
				is closed under complex conjugation,
				then $\G$
				contains every element of the orbit
				$\Orb_\G(1)\subset\HM_\G$.
	\end{enumerate}
\end{thm}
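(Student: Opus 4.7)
The plan is to run all three parts through the matrix embedding $\rho_\B$ of \eqref{rhoB}, whose crucial property is $\rho_\B(q\ct) = \rho_\B(q)^{*}$ (conjugate transpose). This is a direct verification from Corollary \ref{cor:abd}: since $a, b, d \in F^+$, both $\sqrt{a}$ and $\sqrt{b}$ are real while $\sqrt{-abd}$ is purely imaginary, so $\rho_\B(\M)$ consists precisely of the Hermitian elements of $\rho_\B(\B)$, and $\rho_\B(\HM_\G)$ is the set of positive-definite Hermitian matrices of determinant $1$. Under the isometry $\iota_\G$ of \eqref{iota}, the basepoint $1 \in \HM_\G$ corresponds to $J \in \HS^3$.

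For part (1), the forward direction reduces to the classical polar-decomposition fact that a positive-definite Hermitian element of $\SL_2(\CC)$ is the exponential of a trace-zero Hermitian matrix, hence acts on $\HS^3$ as a pure hyperbolic translation along the geodesic through $J$ generated by that matrix. The only case to rule out is trace $\pm 2$, but if $\g \in \G \cap \HM_\G$ has $\tr(\g) = 2w = \pm 2$, then the norm equation $w^2 - a x^2 - b y^2 - a b d z^2 = 1$ together with $a, b, d \in F^+$ forces $x = y = z = 0$, so $\g = \pm 1$. The reverse direction runs the same identification backwards.

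For part (2), I would use the trace bilinear form $\langle p, q\rangle := \tfrac{1}{2}\tr(p q^*)$, which restricts to the form polarizing $\n$ on $\M$ and computes hyperbolic distance on $\HM_\G$ via $\cosh d(p, q) = \langle p, q\rangle$. A direct evaluation gives $\langle 1, \g\rangle = w$; the quaternion Cayley-Hamilton identity $\g^2 = 2 w \g - 1$ (valid because $\n(\g) = 1$) then yields $\langle \g, \g^2\rangle = 2w\langle \g, \g\rangle - \langle \g, 1\rangle = w$, so $d(1, \g) = d(\g, \g^2)$. The same identity exhibits $\{1, \g, \g^2\}$ as coplanar in $\M$, hence collinear on a single geodesic arc in $\HM_\G$, and equidistance then identifies $\g$ as its midpoint.

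For part (3), interpreting closure under complex conjugation as closure under the involution $\dagger$ (which restricts on $K$ to complex conjugation, by the first claim in the proof of Lemma \ref{lem:Hilbert}), the statement is an immediate group-closure argument: $\g \in \G$ implies $\g\ct \in \G$, so $\g \g\ct \in \G$, and $\Orb_\G(1) = \{\g \g\ct \mid \g \in \G\}$ by definition. The main obstacle throughout is the Hermitian identification $\rho_\B(\M) = \Herm(\rho_\B(\B))$; once that is secured, (1) and (2) reduce to standard hyperboloid-model computations for $\HS^3$, and (3) is a one-line corollary.
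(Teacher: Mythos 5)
Your proof is correct, but in the two places where real work is needed it takes a genuinely different route from the paper's. For part (1) the paper stays entirely inside the quaternion algebra: it reads $w\geq 1$ off the norm form exactly as you do, but then verifies by direct expansion that $\g$ preserves the geodesic $\widetilde g$ through $1$ and $\g$, parametrizing $\widetilde g$ by the condition that the pure quaternion part of a point be an $F$-multiple of $\g_0$ and using $\g_0^2\in F$. Your detour through $\rho_\B$, the identification of $\rho_\B(\M)$ with the Hermitian elements of $\rho_\B(\B)$, and the polar decomposition buys the reverse implication of (1) almost for free (a purely hyperbolic $\g$ with axis through $J$ is $U\,\mathrm{diag}(\lambda,\lambda^{-1})\,U^{*}$ with $U\in\mathrm{SU}(2)$, hence Hermitian, hence $\dagger$-symmetric), whereas the paper extracts that implication only as a byproduct of its proof of (2). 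For part (2) the paper introduces the auxiliary isometry $\delta$ with $\delta\delta\ct=\g$ (the ``square-root'' translation along $\widetilde g$), commutes it past $\g$ to get $\delta(\g)=\g^2$, and concludes that $d(1,\g)$ and $d(\g,\g^2)$ both equal the translation length of $\delta$; your argument via $\cosh d(p,q)=\tfrac12\tr(pq^*)$ together with the Cayley--Hamilton identity $\g^2=\tr(\g)\g-\n(\g)=2w\g-1$ avoids introducing $\delta$ altogether and is, if anything, more self-contained --- the only step you should make explicit is that $\tfrac12\tr(pq^*)$ is the real-valued polarization of $\n|_\M$ (immediate from $pq^*+qp^*=\tr(pq^*)$), and that equidistance on the geodesic forces the midpoint because $\g^2\neq 1$ for $\g\neq 1$ in a torsion-free group. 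Part (3) is the same one-line observation in both proofs, and you and the paper make the same (harmless, but worth flagging) identification of ``closed under complex conjugation'' with ``closed under $\dagger$.''
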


\begin{proof}
	Let $\g\in\G\cap\HM_\G$.
	Then $\n(\g)=1$
		and $\exists w,x,y,z\in F$
		such that $\g=w+xi+yj+z\sqrt{-d}ij$.
	Thus $w^2=1+ax^2+by^2+abdz^2$
		and since $a,b\in F^+$,
		we have that $w\in F^{\geq1}$.
	When $w=1$,
		it forces $\g=1$,
		and otherwise we have $\tr(\g)=2w\in F^{>2}$,
		making $\g$
		purely hyperbolic.
	
	Now let $\widetilde{g}$
		be the complete geodesic passing through $1$
		and $\g$,
		and we will prove that $\g$
		fixes this.
	Notice that (as illustrated in Figure \ref{fig:geo})
		the pure quaternion part of the points on $\widetilde{g}$
		are scalar multiples of the pure quaternion part $\g_0$
		of $\g$,
		so we can write
	\begin{equation}\label{geo}
		\widetilde{g}
			=\{q\in\M_+^1\mid \exists\ \lambda\in\RR:
				q_0=\lambda p_0\}.
	\end{equation}
	\begin{figure}[h]\centering
		\includegraphics[scale=0.8]{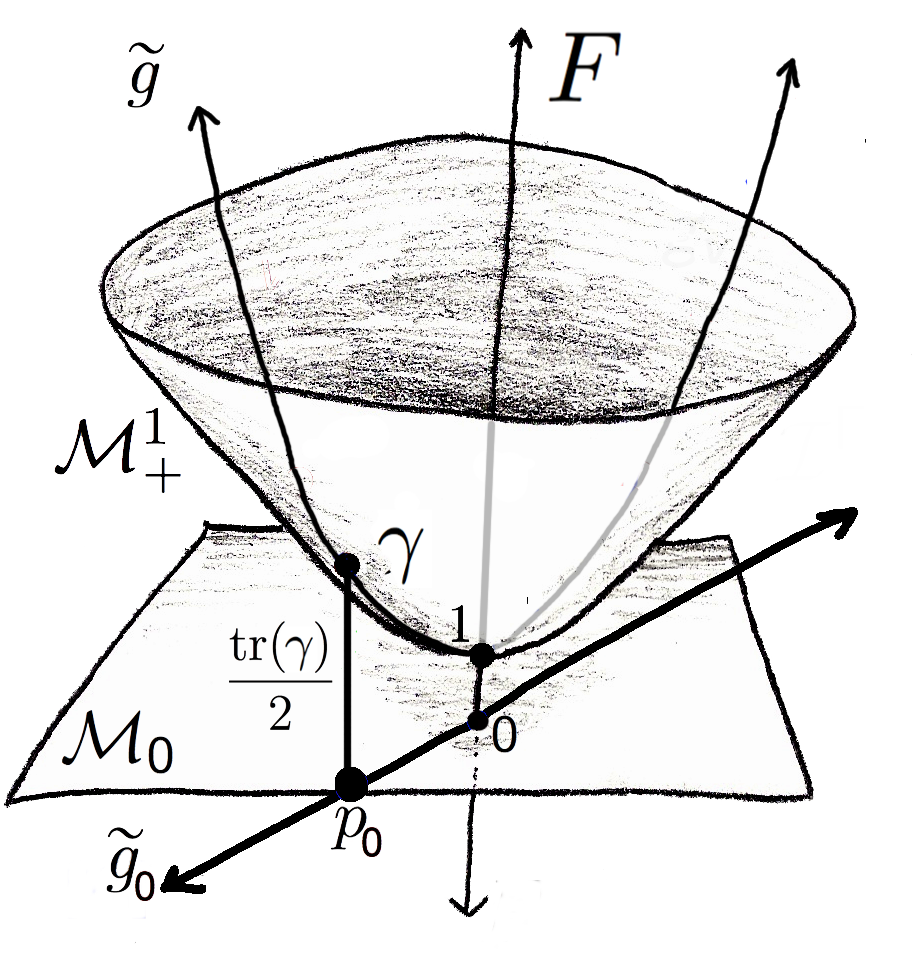}
		\caption{A $2$-dimensional analog of parametrizing
			$\widetilde g$
			using pure quaternions.}
		\label{fig:geo}
	\end{figure}
	Let $p=\frac{\tr(q)}{2}+\lambda p_0\in\widetilde{g}$
		in this notation,
		and then since $\g\in\M=\mathrm{Sym}(\B,\dagger)$,
		we have
	\begin{align*}
		\mu_\G(\g,q)&=\g q\g\ct=\g q\g=\bigg(\frac{\tr(\g)}{2}+\g_0\bigg)
			\bigg(\frac{\tr(q)}{2}+\lambda \g_0\bigg)
			\bigg(\frac{\tr(\g)}{2}+\g_0\bigg).
	\end{align*}
	If we multiply this out,
		there will be scalars $r,s,t,u\in F$
		so that the expression has the form
	\begin{align*}
		\mu_\G(\g,q)&=r+s\g_0+t\g_0^2+u\g_0^3\\
			&=(r+t\g_0^2)+(s+u\g_0^2)\g_0.
	\end{align*}
	But since $\g_0^2=\g_0(-\g_0^*)=-\n(\g_0)\in F$,
		this is indeed a point in $\widetilde{g}$
		as characterized by equation (\ref{geo}).
	
	Next we show that $\g$
		is the midpoint between $1$
		and $\mu_\G(\g,1)$.
	Let $\delta$
		be the hyperbolic translation along $\widetilde{g}$
		such that $\mu_\G(\delta,1)=\g$.
	Since $\g$
		is a hyperbolic translation along the same geodesic,
		it commutes with $\delta$,
		giving
		$\delta(\g)=\delta \g\delta\ct=\g\delta\delta\ct=\g\delta(1)=\g^2$.
	Therefore $\mathrm{d}(1,\g)$
		and $\mathrm{d}(\g,\g^2)$
		both equal the translation length of $\delta$.
	This proves (2),
		and also shows that every purely hyperbolic translation
		along a geodesic through $1$
		occurs as a point on $\HM_\G$,
		completing the proof of (1).
	
	For part (3),
		observe that if $\g,\g\ct\in\G$,
		then $\g(1)=\g\g\ct\in\G$.
\end{proof}

\begin{cor}
	If $\g\in\G\cap\HM_\G$,
		then $\widetilde{s}(\g)$
		 perpendicularly bisects $g(\g)$
		at $\g$,
		so that $\g$
		is the closest point to $1$
		on $\widetilde{s}(\g)$.
\end{cor}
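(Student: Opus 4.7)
The plan is to derive this corollary essentially for free from Theorem \ref{thm:GcapI}(2), together with the standard fact that in $\hyp^3$ the unique closest point on a totally geodesic hyperplane $H$ to a point $p\notin H$ is the foot of the perpendicular from $p$ to $H$.

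First I would unpack the definitions in the present setting. Since $\g\in\G\cap\HM_\G$ we have $\g\ct=\g$, so $\mu_\G(\g,1)=\g\g\ct=\g^2$. Thus $g(\g)$ is the geodesic segment from $1$ to $\g^2$, and $\widetilde{s}(\g)$ is by definition the hyperplane perpendicular to $g(\g)$ at its midpoint. By Theorem \ref{thm:GcapI}(2), that midpoint is $\g$ itself. Therefore $\widetilde{s}(\g)$ passes through $\g$ and meets $g(\g)$ orthogonally at $\g$; equivalently, the geodesic from $1$ to $\g$ is already the perpendicular from $1$ to $\widetilde{s}(\g)$.

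For the closest-point claim, I would invoke hyperbolic Pythagoras: for any $q\in\widetilde{s}(\g)$ distinct from $\g$, the triangle with vertices $1,\g,q$ has a right angle at $\g$ (by the previous paragraph), so
\begin{equation*}
\cosh d(1,q)=\cosh d(1,\g)\,\cosh d(\g,q)>\cosh d(1,\g),
\end{equation*}
which gives $d(1,q)>d(1,\g)$. Hence $\g$ is the unique point of $\widetilde{s}(\g)$ minimizing distance to $1$.

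The main point is that there is no real obstacle: the content of the corollary is a direct translation of Theorem \ref{thm:GcapI}(2) into the vocabulary of Dirichlet-domain sides, plus one application of hyperbolic Pythagoras. No further quaternionic computation is required, and in particular one does not need to reopen the formula for $\mu_\G$. This is precisely what makes the statement computationally useful in the algorithm of \S\ref{sec:app sub:dir}: whenever an orbit point $\g\g\ct$ comes from a group element $\g$ that happens to lie in $\HM_\G$, the side $\widetilde{s}(\g)$ and its nearest point to $1$ are read off directly from $\g$ itself, bypassing the perpendicular-bisector computation.
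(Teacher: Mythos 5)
Your argument is correct and matches the paper's intent: the corollary is stated without proof as an immediate consequence of Theorem \ref{thm:GcapI}(2), and your unpacking (midpoint of $g(\g)$ is $\g$, hence $\widetilde{s}(\g)$ passes through $\g$ orthogonally to the geodesic from $1$, hence $\g$ is the foot of the perpendicular and the unique closest point by hyperbolic Pythagoras) is exactly the routine verification being elided. No gaps.
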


We can use this to shorten the computation of $\Dir_\G(1)$
	by first approximating it by the bisectors
	of the elements of $\G\cap\HM_\G$,
	and we are guaranteed that any region outside of those
	does not lie in $\Dir_\G(1)$.
As we do this,
	we can skip the computation of their Frobenius norm,
	and just look through the matrices in $\G\cap\HM_\G$
	in order of increasing trace.
We can also skip the usually longer
	computation of perpendicular bisectors for these points,
	instead simply taking the hyperplane whose closest point to $1$
	is $\g\in\G\cap\HM_\G$.
As we will see in the examples at the end of this section,
	this often gives a lot of information so that completing the Dirichlet
	domain computation is easy from there.

This method is especially effective in the event that $\G$
	is closed under complex conjugation.
However it is important to note that even in that case,
	an element $\g$
	one finds by searching through $\G\cap\HM_\G$
	in order of increasing trace does not necessarily contribute
	a side to $\Dir_\G(1)$,
	even if $\widetilde s(\g)$
	(recalling the notation from \S\ref{sec:bg sub:DD})
	truncates the region computed so far.
In particular,
	if $\g\in\Orb_\G(1)$,
	then $\g=\delta\delta\ct$
	for some $\delta\in\G\smallsetminus\HM_\G$,
	in which case $\widetilde s(\delta)$
	passes halfway between $1$
	and $\g$.
(Notice that if $\g\in\Orb_\G(1)$
	and $\g=\delta\delta\ct$
	for some $\delta\in\G\cap\HM_\G$,
	then $\g$
	would not contribute a side since $\widetilde s(\g)$
	would be excluded from the region by $\widetilde s(\delta)$.)
On the other hand,
	if $\g\notin\Orb_\G(1)$
	and does contribute a side to the region computed thus far,
	then it also contributes a side to $\Dir_\G(1)$
	(at the very least,
	the point on that side which is closest to $1$
	will not be truncated by any other side).
	
So evoking Theorem \ref{thm:GcapI}
	when $\G$
	is closed under complex conjugation gives that
	each element of $\G\cap\HM_\G$
	either contributes a side to $\Dir_\G(1)$
	or lies on a side of $\Dir_\G(1)$
	at the point where that side is closest to $1$.
If one cannot see which case a given $\g$
	falls into by some easier means,
	it is straightforward to check whether $\g$
	factors into $\g=\delta\delta\ct$
	as a word in the generators.
This is will be illustrated in the examples at the end of this section.

\subsubsection{\bf Projective pure quaternions as slopes}

We gain an additional tool by expanding on the idea from the proof
	of Theorem \ref{thm:GcapI}
	where we used pure quaternion parts to characterize
	geodesics through $1$.
For each point $\mu_\G(\g,1)\in O_\G$,
	the geodesic ray that starts at $1$
	and passes through $\mu_\G(\g,1)$
	has a pure quaternion part which is a Euclidean ray
	(this can be seen in Figure \ref{fig:geo}).
So we can identify these geodesic rays as follows.

\begin{defn}\label{defn:slope}
	The \emph{slope} of $w+xi+yj+\sqrt{-d} \, zij\in O_\G$,
		is $[x,y,z]\in F^3/F^+$.
\end{defn}

As we search through $\G\cap\HM_\G$
	(or through $\Orb_\G(1)$)
	in order of trace,
	once we find an element having slope $[x,y,z]$,
	we know that any other element with that slope but higher trace
	cannot contribute a side to $\Dir_\G(1)$.
Afterall,
	for two bisectors of the same geodesic ray,
	one will be contained in the half-space of the other.
So keeping track of these slopes shortens the algorithm further
	because every time the slope of some element in $\G\cap\HM_\G$
	is the same as one already found,
	we know it will not contribute a side and can skip any
	more complicated means of determining this.

\subsection{Examples}\label{sec:app sub:ex}

We conclude by clarifying these ideas
	with some simple worked examples.
First we look at an implementation of the $2$-dimensional
	analogy of these ideas,
	then at an application to non-compact arithmetic
	manifolds.
Notably,
	the fundamental domains shown here 
	were drawn by hand in a basic illustration program,
	as the computational method was simple enough that it did
	not to require advanced software.

\subsubsection{\bf A Hyperbolic Punctured Torus}
	\label{sec:app sub:ex sub:PT}

Let $S$
	be the hyperbolic punctured torus.
Then $\pi_1(S)$
	can be represented by $\D<\mathrm{PSL}_2(\mathbb{Z})$
	where $\D$
	is the torsion-free subgroup of the modular group.
Then $\D=\langle\g,\delta\rangle$
	where
\begin{gather*}
	\g=\begin{pmatrix}1&1\\1&2\end{pmatrix}\quad\text{and}\quad
		\delta=\begin{pmatrix}1&-1\\-1&2\end{pmatrix}
		\text{\cite[p.116]{Bonahon2009}}.
\end{gather*}

The quaternion algebra of $\D$
	is $B\D
		=\Big(\frac{1,1}{\QQ}\Big)
		=\mathbb{Q}\oplus\QQ i\oplus\QQ j\oplus\QQ ij$
	where $i^2=j^2=1$ and $ij=-ji$,
	and let $\A=B\D$.
Then $\A$
	contains the restricted Macfarlane space $\caL
		=\mathbb{Q}\oplus\QQ i\oplus\QQ j$,
	and $\HM_\D:=\caL_+^1$
	is a quaternion hyperboloid model for $S$.
	
Recall that $\A\cong\mat_2(\QQ)$.
So the map (\ref{rhoB}) gives the $\QQ$-algebra
	isomorphisms
\begin{align*}
	\rho_\A&:
	\A\rightarrow\mathrm{M}_2(\QQ),\quad
		w+xi+yj+zij\mapsto\begin{pmatrix}
				w-x&y-z\\
				w+z&y+z
			\end{pmatrix},\\
	\rho_\A^{-1}&:
		\mathrm{M}_2(\QQ)\rightarrow\A,\quad
		\begin{pmatrix}
			s&t\\u&v
		\end{pmatrix}\mapsto
			\frac{v+s}{2}+\frac{v-s}{2}i+\frac{u+t}{2}j+\frac{u-t}{2}ij,
\end{align*}
	the map (\ref{iota})
	gives the isometry
\begin{align*}
	\iota_\D:\HM_\D\rightarrow\HS^2,\quad
		w+xi+yj\mapsto\frac{y+J}{w+x},
\end{align*}
	and these transfer the Macfarlane model to the \Mob
	action of $\D$
	on $\HS^2$.
When the context is clear,
	we use $\D=\langle\g,\delta\rangle$
	to mean both the matrix group
	and the corresponding quaternion group under $\rho_A^{-1}$,
	where
\begin{gather*}
	\g=\frac{3}{2}+\frac{1}{2}i+j
		\quad\text{and}\quad\delta=\frac{3}{2}+\frac{1}{2}i-j.
\end{gather*}

To implement the algorithm,
	we want to find the points in $\Orb_\D(1)$
	in order of increasing trace.
Since $\D$
	consists of all the non-elliptic elements of $\PSL_2(\ZZ)$,
	$\D$
	is closed under transposition,
	therefore
	$\Orb_\D(1)\subset\D$.
Then $\forall t\in\NN$
	$$\big\{q\in\Orb_\G(1)\;\big|\;\tr(qq\ct)=t\big\}
		\subset\big\{q\in\D\cap\HM_\D\mid\tr(q)=t\big\}.$$
But using $n(\D\cap\HM_\D)=1$,
	and the fact that
	hyperbolic elements have traces in $\RR\sm[-2,2]$,
	we can characterize these elements using a Diophantine equation
	where the only solutions for $t$
	lie in $\ZZ\sm\{0,\pm1\}$.
In particular,
\begin{align*}
	\big\{q\in\D\cap\HM_\D\;\big|\;\tr(q)=t\big\}
		=\bigg\{\frac{t}{2}+\frac{t-2x}{2}i+y \;\Big|\;  x^2+y^2=tx-1,
			\; x,y\in\ZZ\bigg\}.
\end{align*}

Once we know what this (finite) set is,
	we can find all matrices of trace $t$
	by checking wether each element can be written in the form
	$q=ww\ct$,
	for some word $w$
	in the generators of $\D$.
If it can,
	we get that $\widetilde{s}(q)$
	(in the notation of Definition \ref{defn:sides})
	passes halfway between $1$
	and $q$.
If it cannot,
	we get that $q^2\in\Orb_\D(1)$
	and $\widetilde{s}(q^2)$
	passes through $q$,
	by Theorem \ref{thm:GcapI}.


Table \ref{tab:puncT}
	lists some data from implementing this process,
	giving the points in $\D\cap\HM_\D$
	up to trace $18$.
For each $q\in\D\cap\HM_\D$,
	the direction of the corresponding geodesic ray
	is given by a normalized representative of the slope of $q$
	(in the sense of Definition \ref{defn:slope}). 
In the rightmost column,
	the corresponding points in $\HS^2$
	under $\iota_\D$
	are given.

Notice that (as predicted by Theorem \ref{thm:GcapI}),
	if a point $q$
	in the chart does not lie in $\Orb_\D(1)$,
	then later the point $q^2$
	does,
	and has the same slope.
For example,
	the elements of $\D\cap\HM_\D$
	at trace $3$
	lead to sides contributed by isometries as points at trace $6$.
The points in the table which lie in $\Orb_\G(1)$
	are in bold,
	and one can see that the sides they contributed had already
	been found in $\D\cap\HM_\D$
	before they were reached in the orbit.
		
Figure \ref{fig:puncT}
	shows in $\HS^2$
	which sides are contributed at each trace 
	(the table gives the trace of the isometry contributing the side,
	even though it was found earlier)
	until $\Dir_\D(1)$
	is complete, 
	and illustrates how the induced
	side-pairings create a punctured torus.

\subsubsection{\bf Non-compact Arithmetic Hyperbolic $3$-Manifolds}	
\label{sec:dir sub:noncomp arith}
As mentioned in \S\ref{sec:macs sub:non-compact sub:arith},
	a manifold in this class has a fundamental group which
	can be represented by a torsion-free finite index subgroup
	of a Bianchi group $\Bi$,
	$d\in\NN$ (square-free).
Let $\G$
	be such a group.
Then $B\G=\Big(\frac{1,1}{\QQ(\sqrt{-d})}\Big)$
	is Macfarlane and we can find a Dirichlet domain for $\G$
	by a similar method to that used in the previous subsection.
	
Since the only real traces occurring in $\Bi$
	lie in $\ZZ$,
	the ellipse at trace $t$
	can only be non-empty when $t\in\{2,3,4,\dots\}$.
Since $\Bi$
	is closed under complex conjugation,
	$\Orb_\G(1)\subset\Bi$,
	so like in the previous example,
	the ellipse at trace $t$
	is a subset of 
	$\big\{\g\in\Bi\mid\tr(\g)=t\big\}$.
The points in $\Bi$
	of trace $t$
	correspond to solutions to the Diophantine equation arising from
	$\det\begin{pmatrix}
		r & s\\
		\overline{s} & t-r
	\end{pmatrix}=1$
	where $r\in\ZZ$
	and $s\in\mathcal{O}_d$.
Once we find those,
	we can use the generators for $\G$
	to determine which lie in the orbit and which are intersected
	by sides.
	
\begin{ex}
	The fundamental group of the Whitehead Link Complement
		can be represented by the finite-index subgroup
		of $\PSL_2(\ZZ[\sqrt{-1}])$
		generated by $\begin{pmatrix}
				1 & 2\\
				0 & 1
			\end{pmatrix},
			\begin{pmatrix}
				1 & \sqrt{-1}\\
				0 & 1
			\end{pmatrix}$
			and $\begin{pmatrix}
				1 & 0\\
				-1-\sqrt{-1} & 1
			\end{pmatrix}$
		\cite[p.62]{MaclachlanReid2003}.
	Suppressing some details,
		an implementation of the process described above
		yields the Dirichlet domain for this group
		illustrated in Figure \ref{fig:WhiteheadDir},
		where we view the faces from above in $\HS^3$
		after applying $\rho_{B\G}$,
		and indicate the traces of the isometries contributing each side.
\end{ex}
\section{Acknowledgements}

This work has been made possible thanks to support from
	the Graduate Center of the City University of New York,
	and from the Instituto de Matem\'{a}ticas at
	Universidad Nacional Aut\'{o}noma de M\'{e}xico,
	Unidad de Cuernevaca.
I would like to thank
	Abhijit Champanerkar for helpful suggestions throughout,
	John Voight 
	for copious advice on properties of quaternion algebras
	and for making preprints of his book available,
	and the referees for their very constructive input.
I also thank
	Ian Agol,
	Marcel Alexander,
	Ben Blum-Smith,
	Neil Hoffman,
	Misha Kapovich,
	Stefan Kohl,
	Greg Laun,
	Ben Linowitz,
	Aurel Page,
	and Igor Rivine
	for additional helpful comments and correspondence.

\bibliographystyle{plain}
\bibliography{references}

\begin{table}[!htbp]
\centering
\caption{A punctured torus group intersected with its own
	quaternion hyperboloid model.}
\label{tab:puncT}
\begin{tabular}{| c || c | c | c | c |}
	\hline
		{trace} &
			{$q\in\HM_\D\cap\D$} &
			{slope of $q$} &
			{$\rho_\D(q)\in\mathrm{PSL}_2(\mathbb{\ZZ})$} &
			{$\iota_\D(q)\in\HS^2$}  \\
	\hline
		2 &
			1 &
			-- &
			$\begin{pmatrix}1&0\\0&1\end{pmatrix}$ &
			$J$ \\
	\hline
		3 &
			$\frac{3}{2}+\frac{1}{2}i\pm j$ &
			$[1,\pm2]$ &
			$\begin{pmatrix}1&\pm1\\\pm1&2\end{pmatrix}$ &
			$\pm\frac{1}{2}+\frac{1}{2}J$ \\ &
		$\frac{3}{2}-\frac{1}{2}i\pm j$ &
			$[-1,\pm2]$ &
			$\begin{pmatrix}2&\pm1\\\pm1&1\end{pmatrix}$ &
			$\pm1+J$ \\
	\hline
		6 &
			$3+2i\pm2j$ &
			$[1,\pm1]$ &
			$\begin{pmatrix}1&\pm2\\\pm2&5\end{pmatrix}$&
			$\pm\frac{2}{5}+\frac{1}{5}J$\\ &
			\boldmath$3-2i\pm2j$ & 
			$[-1,\pm1]$ & 
			$\begin{pmatrix}5&\pm2\\\pm2&1\end{pmatrix}$ &
			\boldmath$\pm2+J$ \\
	\hline
		7 &
			\boldmath$\frac{7}{2}+\frac{3}{2}i\pm3j$ & 
			$[1,\pm2]$ & 
			$\begin{pmatrix}2&\pm3\\\pm3&5\end{pmatrix}$ &
			\boldmath$\pm\frac{3}{5}+\frac{1}{5}J$ \\&
		$\frac{7}{2}-\frac{3}{2}i\pm3j$ &
			$[-1,\pm2]$ &
			$\begin{pmatrix}5&\pm3\\\pm3&2\end{pmatrix}$&
			$\pm\frac{3}{2}+\frac{1}{2}J$ \\
	\hline
		11 &
			\boldmath$\frac{11}{2}+\frac{9}{2}i\pm3j$ &
			$[3,\pm2]$ &
			$\begin{pmatrix}1&\pm3\\\pm3&10\end{pmatrix}$&
			\boldmath$\pm\frac{3}{10}+\frac{1}{10}J$ \\&
		$\frac{11}{2}-\frac{9}{2}i\pm3j$ &
			$[-3,\pm2]$ &
			$\begin{pmatrix}10&\pm3\\\pm3&1\end{pmatrix}$&
			$\pm3+J$ \\
	\hline
		15 &
			$\frac{15}{2}+\frac{11}{2}i\pm5j$ &
			$[11,\pm10]$ &
			$\begin{pmatrix}2&\pm5\\\pm5&13\end{pmatrix}$ &
			$\pm\frac{5}{13}+\frac{1}{13}J$ \\ &
		\boldmath$\frac{15}{2}-\frac{11}{2}i\pm5j$ &
			$[-11,\pm10]$ &
			$\begin{pmatrix}13&\pm5\\\pm5&2\end{pmatrix}$ &
			\boldmath$\pm\frac{5}{2}+\frac{1}{2}J$ \\ &
		$\frac{15}{2}+\frac{5}{2}i\pm7j$ &
			$[5,\pm14]$ &
			$\begin{pmatrix}5&\pm7\\\pm7&10\end{pmatrix}$ &
			$\pm\frac{7}{10}+\frac{1}{10}J$ \\ &
		$\frac{15}{2}-\frac{5}{2}i\pm7j$ &
			$[-5,\pm14]$ &
			$\begin{pmatrix}10&\pm7\\\pm7&5\end{pmatrix}$ &
			$\pm\frac{7}{5}+\frac{1}{5}J$ \\
	\hline
		18 &
			$9+8i\pm4j$ &
			$[2,\pm1]$ &
			$\begin{pmatrix}1&\pm4\\\pm4&17\end{pmatrix}$ &
			$\pm\frac{4}{17}+\frac{1}{17}J$ \\
		& $9-8i\pm4j$ & 
			$[-2,\pm1]$ &
			$\begin{pmatrix}17&\pm4\\\pm4&1\end{pmatrix}$ & 
			$\pm4+J$ \\ 
		& $9+4i\pm8j$ & 
			$[1,\pm2]$ &
			$\begin{pmatrix}5&\pm8\\\pm8&13\end{pmatrix}$ &
			$\pm\frac{8}{13}+\frac{1}{13}J$ \\
	  	& $9-4i\pm8j$ &
			$[-1,\pm2]$
			& $\begin{pmatrix}13&\pm8\\\pm8&5\end{pmatrix}$
			& $\pm\frac{8}{5}+\frac{1}{1}J$ \\
	\hline
\end{tabular}
\end{table}

\begin{figure}
\centering
	\caption{Dirichlet domain for a hyperbolic punctured torus.}
    \includegraphics[width=.95\textwidth]{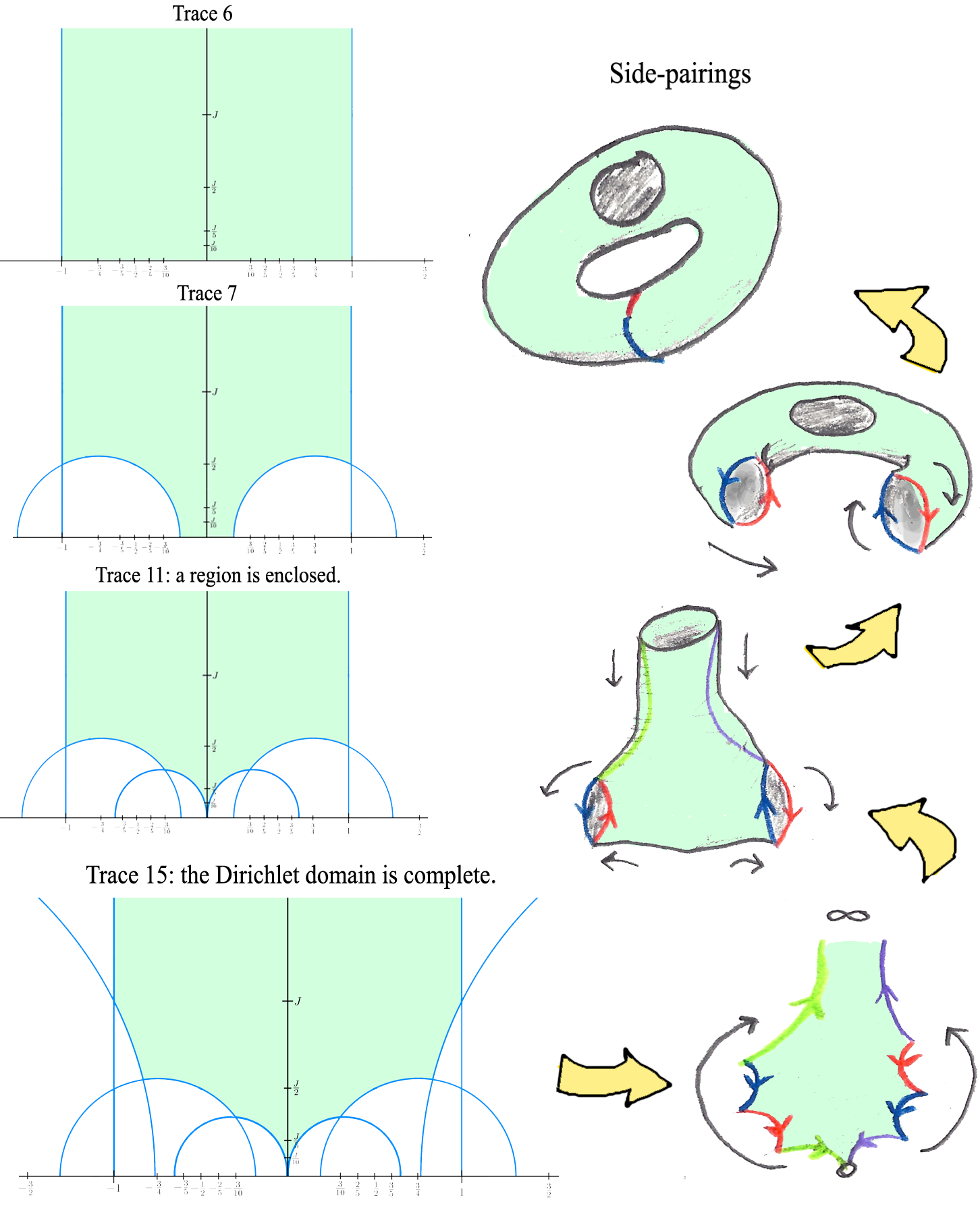}
	\label{fig:puncT}
\end{figure}

\begin{figure}
\centering
\caption{Dirichlet domain for the Whitehead link complement.}
	\includegraphics[width=1\textwidth]{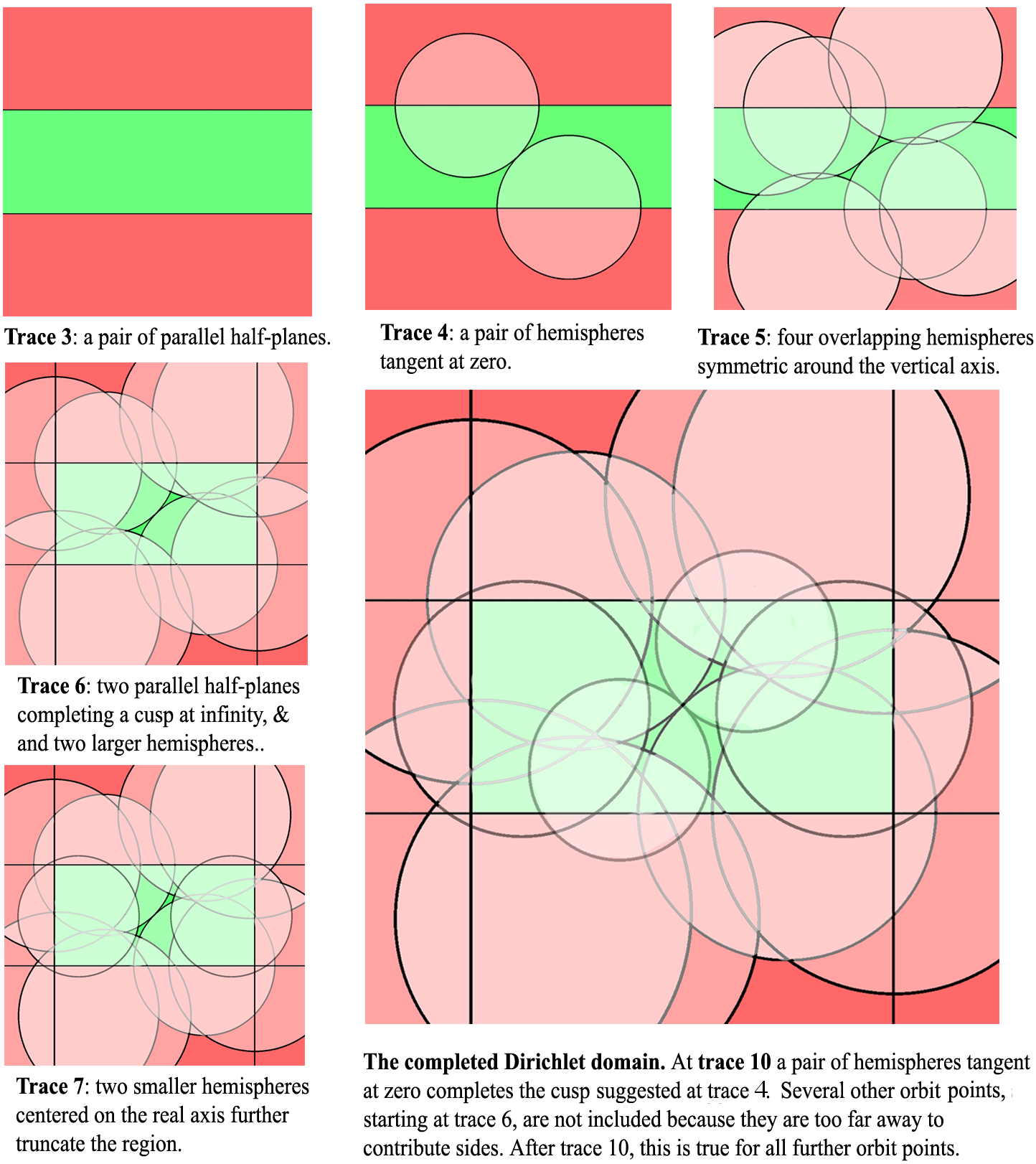}
	\label{fig:WhiteheadDir}
\end{figure}

\end{document}